\newtheorem{theorem}{Theorem} [section]
\newtheorem{thm}[theorem]{Theorem}
\newtheorem{cor}[theorem]{Corollary}
\newtheorem{lemma}[theorem]{Lemma}
\newtheorem{prop}[theorem]{Proposition}
\begin{document}


\title{Intrinsic Linking and Knotting in Tournaments}

\author{Thomas Fleming}
\author{Joel Foisy}

\begin{abstract}

A directed graph $G$ is \emph{intrinsically linked} if every embedding of that graph contains a non-split
link $L$, where each component of $L$ is a consistently oriented cycle in $G$. A \emph{tournament} is a
directed graph where each pair of vertices is connected by exactly one directed edge.

We consider intrinsic linking and knotting in tournaments, and study the minimum number of vertices required
for a tournament to have various intrinsic linking or knotting properties. We produce the following bounds:
intrinsically linked ($n=8$), intrinsically knotted ($9 \leq n \leq 12$), intrinsically 3-linked ($10 \leq n
\leq 23$), intrinsically 4-linked ($12 \leq n \leq 66$), intrinsically 5-linked ($15 \leq n \leq 154$),
intrinsically $m$-linked ($3m \leq n \leq 8(2m-3)^2$), intrinsically linked with knotted components ($9 \leq
n \leq 107$), and the disjoint linking property ($12 \leq n \leq 14$).

We also introduce the \emph{consistency gap}, which measures the difference in the order of a graph required
for intrinsic $n$-linking in tournaments versus undirected graphs. We conjecture the consistency gap to be
non-decreasing in $n$, and provide an upper bound at each $n$.
\end{abstract}

\maketitle

\section{Introduction}

A graph is called \emph{intrinsically linked} if every embedding of that graph in $S^3$ contains cycles that
form a non-split link, and \emph{intrinsically knotted} if every embedding of that graph contains a cycle
that is a non-trivial knot. These properties were first studied by Sachs \cite{sachs} and by Conway and
Gordon \cite{cg}.

Researchers have studied variations of these properties, such as requiring every embedding of the graph to
contain cycles that form a non-split $n$-component link \cite{ffnp}, a non-split link where one of more of
the components are non-trivial knots \cite{flapan} \cite{flem}, or even more complex structures \cite{nikk2}
\cite{REU}.

A directed graph is \emph{intrinsically $n$-linked as a directed graph} (or \emph{intrinsically knotted as a
directed graph}) if every embedding of that graph in $S^3$ contains cycles that form a non-split $n$
component link (or non-trivial knot), and the edges that comprise each cycle of the link (or knot) have a
consistent orientation.  Examples of intrinsically 2-linked directed graphs are known \cite{FHR}, and
intrinsically knotted, intrinsically 3-linked and 4-linked directed graphs have also been constructed
\cite{flemfoisy}.

In this work, we focus on a subset of directed graphs known as tournaments.   A \emph{tournament} on $n$
vertices is a directed graph with exactly one directed edge between each pair of vertices.  Equivalently, a
tournament on $n$ vertices is $K_n$ with a choice of orientation for each edge. We then ask, given an
intrinsic linking property, what is the smallest $n$ such that there exists a tournament on $n$ vertices
having that property?  In Section 2, we study intrinsic linking and have the precise answer $n=8$.   That is,
while $K_6$ is intrinsically linked, no tournament on 7 or fewer vertices is intrinsically linked as a
digraph, and there exists a tournament on 8 vertices that is intrinsically linked as a digraph.

For intrinsic knotting we have the bounds $9 \leq n \leq 12$ in Section 3, and for the disjoint linking
property we have $12 \leq n \leq 14$ in Section 9.  In Sections 4, 5 and 6 we find $10 \leq n \leq 23$ for
intrinsic 3-linking, $12 \leq n \leq 66$ for intrinsic 4-linking and $15 \leq n \leq 154$ for intrinsic
5-linking.  We address $m$-linking for $m>5$ in Section \ref{n_link_section} and have $3m \leq n \leq
8(2m-3)^2$.  In Section 8, we construct a tournament that contains a non-split link where at least one of the
components is a non-trivial knot, obtaining the bounds $9 \leq n \leq 107$ for this property.

The construction of the 4-linked tournament in Section 5 is similar in spirit to that of the 4-linked
directed graph in \cite{flemfoisy} but is substantially less complicated and requires far fewer vertices and
edges. Section 6 extends this construction to produce a 5-linked tournament.
Adapting techniques of Flapan, Mellor and Naimi \cite{flapan}, we are able to demonstrate $n$-linked
tournaments for all $n$. In very recent work, Mattman, Naimi and Pagano independently used similar techniques
to construct examples of intrinsically $n$-linked complete symmetric directed graphs \cite{mnp}.

Given a tournament, we may ignore the edge orientations and consider it as an undirected complete graph.
This graph may be intrinsically $n$-linked even if the tournament is not intrinsically $n$-linked as a
directed graph.  Thus the requirement that the components of the non-split link have a consistent orientation
for an intrinsically $n$-linked digraph is restrictive and appears to require a larger and more complex graph
to satisfy.

We introduce the \emph{consistency gap} to measure this difference and denote it as $cg(n)$. We define $cg(n)
= m' - m$ where $K_m$ is the smallest intrinsically $n$-linked complete graph, and $m'$ is the number of
vertices in the smallest tournament that is intrinsically $n$-linked as a directed graph.  In Section 10, we
show that $cg(2)=2$ and provide a bound on $cg(n)$ for all $n$.

\section{Intrinsic Linking in Tournaments}

\begin{prop} No tournament on 6 vertices is intrinsically linked as a directed graph.
\label{K_6_nIL_prop}
\end{prop}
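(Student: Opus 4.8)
The plan is to exhibit, for an arbitrary tournament $T$ on $6$ vertices, a single embedding of the underlying $K_6$ that contains no non-split link whose components are consistently oriented cycles. Since being intrinsically linked requires \emph{every} embedding to contain such a link, producing one ``good'' embedding for each $T$ suffices, and the argument will be uniform over all $56$ tournaments rather than a case-by-case check.

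First I would pin down the only possible shape of an offending link. A tournament has no digons, so the shortest consistently oriented cycle is a directed $3$-cycle; hence two vertex-disjoint consistently oriented cycles need at least $6$ vertices, and $3$ or more such cycles need at least $9$. On exactly $6$ vertices this forces a $3+3$ split: any candidate non-split consistently oriented link is a pair of directed $3$-cycles supported on complementary triples $A, A^c$ with both $A$ and $A^c$ inducing a directed $3$-cycle. I will call such a partition \emph{bi-cyclic}. Thus $T$ is intrinsically linked only if in every embedding some bi-cyclic pair is non-split, and it suffices to find an embedding in which no bi-cyclic pair is linked.

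Next I would invoke the standard embedding of $K_6$ used by Conway and Gordon \cite{cg}: place the vertices as a planar regular hexagon in cyclic order $v_1, \dots, v_6$, draw all edges as chords, and resolve crossings by a fixed height function. In this embedding the unique non-split pair of disjoint triangles is the interleaved pair $\{v_1, v_3, v_5\}$ and $\{v_2, v_4, v_6\}$, while the other nine complementary-triple pairs are split. Relabeling which vertices occupy odd and even positions, i.e.\ listing a desired partition as $a_1, b_1, a_2, b_2, a_3, b_3$ around the hexagon, lets me realize any prescribed partition $\{A, A^c\}$ as this single linked pair. The combinatorial step is then to produce a safe ``hiding place'': since the out-degrees sum to $\binom{6}{2}=15$, some vertex $v$ has out-degree $\geq 3$, and any two of its out-neighbors together with $v$ form a transitive (non-directed) triple, so at least one triple $A$ is not a directed $3$-cycle. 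Embedding $T$ with this transitive $A$ and its complement in the interleaved positions, the only non-split pair is $\{A, A^c\}$, which is not bi-cyclic, and every bi-cyclic pair lies among the nine split pairs. This embedding therefore contains no non-split consistently oriented link, so $T$ is not intrinsically linked.

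I expect the main obstacle to be nailing down the embedding step precisely: verifying that in the chosen spatial model the interleaved triangles genuinely form a non-split (Hopf) link while every other disjoint-triangle pair is actually split rather than merely of linking number $0$, and that the relabeling cleanly seats the transitive triple in the interleaved slots. The structural reduction to bi-cyclic $3+3$ pairs and the count guaranteeing a transitive triple are routine; once the linking pattern of the standard embedding is fixed, the parity constraint of Conway--Gordon (which forces an \emph{odd}, hence nonzero, number of linked triangle pairs in any embedding) is automatically respected, since the unavoidable link is carried by a pair that cannot be consistently oriented.
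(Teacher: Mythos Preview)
Your argument is correct and takes a genuinely different route from the paper. The paper's proof is a one-line citation: a tournament on $6$ vertices has $15$ edges, and Corollary~3.10 of \cite{FHR} says any directed graph on $6$ vertices with at most $23$ edges is not intrinsically linked. Your proof, by contrast, is self-contained: you reduce to complementary $3$-cycle pairs, use the classical Conway--Gordon embedding of $K_6$ with a unique non-split triangle pair, and then a short pigeonhole (out-degrees summing to $15$ forces a vertex dominating two others, hence a transitive triple) lets you seat a non-directed triple in the interleaved slots. The paper's approach is shorter but opaque, outsourcing all the content to \cite{FHR}; your approach explains concretely why the orientation constraint kills the single unavoidable link, and the same idea (find a transitive triple and hide it where the link must live) is the germ of the more elaborate case analysis the paper carries out for $K_7$ in Theorem~\ref{K7_NIL_thm}. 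The one point you flag as a potential obstacle---that the nine non-interleaved pairs in the hexagonal embedding are genuinely split, not merely of linking number zero---is standard and easily checked, so it is not a real gap.
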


\begin{proof}
An orientation of $K_6$ gives a directed graph on 6 vertices with 15 edges.  By Corollary 3.10 of \cite{FHR},
any directed graph on 6 vertices with 23 or fewer edges is not intrinsically linked as a directed graph.
\end{proof}

\begin{thm}
No tournament on 7 vertices is intrinsically linked as a directed graph. \label{K7_NIL_thm}
\end{thm}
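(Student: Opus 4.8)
The plan is to prove non-linking constructively: for each tournament $T$ on $7$ vertices I would exhibit a single embedding of $T$ in $S^3$ containing no non-split link made of two consistently oriented cycles. Since intrinsic linking as a digraph requires that \emph{every} embedding contain such a link, one good embedding per tournament suffices. (Note that the edge-count route of Proposition~\ref{K_6_nIL_prop} is not obviously available here, since $T$ has $21$ edges and I do not assume a $7$-vertex analogue of the $23$-edge bound.) The first step is to cut down the cycles that must be examined. Two vertex-disjoint directed cycles in a $7$-vertex digraph have length at least $3$ each, so together they use either $6$ vertices (two directed triangles) or $7$ vertices (a directed triangle disjoint from a directed $4$-cycle). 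Hence the only candidate directed non-split links are of two types: (i) a pair of vertex-disjoint directed triangles, and (ii) a directed triangle disjoint from a directed $4$-cycle. Moreover, the four vertices of any directed $4$-cycle span a strongly connected sub-tournament and so, by Moon's theorem, contain a directed triangle; this lets me keep the vertex-set bookkeeping concentrated on $6$-element subsets.

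For the embedding I would use the standard convex (moment-curve) embedding of $K_7$, in which linking of two vertex-disjoint cycles is governed by a purely combinatorial alternation criterion on the cyclic order of their vertices, independent of edge orientation. In particular, for any $6$ of the $7$ points in convex position there is exactly one partition into two triangles that is linked, the alternating one $\{a_1,a_3,a_5\}\,/\,\{a_2,a_4,a_6\}$, recovering the Conway--Gordon count of a single linked pair on six vertices. Running over the $\binom{7}{6}=7$ six-subsets, together with the finitely many triangle/$4$-cycle linked patterns on all seven points, produces a complete and explicit list of the disjoint cycle pairs that are linked in this embedding.

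The heart of the argument, and the step I expect to be the main obstacle, is to \emph{choose the placement} of the vertices of $T$ on the moment curve so that none of the listed linked patterns is simultaneously realized by two \emph{directed} cycles. A natural placement orders the vertices along a Hamiltonian directed path $v_1\to v_2\to\cdots\to v_7$, which exists in every tournament by R\'edei's theorem; relative to this linear order every directed cycle must traverse at least one backward edge, which constrains which alternating triples can all be directed triangles at once. What remains is to verify that, for the orientation of $T$, every linked alternating pattern contains at least one transitive (non-cyclic) triangle, and likewise for the mixed triangle/$4$-cycle patterns. The difficulty is that a single fixed order is unlikely to defeat all patterns for all orientations, so the real work is either a structural analysis locating the cyclic triangles of $T$ relative to the chosen order, or a finite verification over the $456$ tournaments on $7$ vertices (reduced where possible by the symmetry of the convex embedding). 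Establishing that such a placement always exists -- equivalently, that the alternation constraint and the consistent-orientation constraint are never jointly satisfiable -- is where the difficulty concentrates.
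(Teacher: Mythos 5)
Your high-level strategy is the same as the paper's: exhibit, for each tournament on $7$ vertices, one embedding in which every non-split link has an inconsistently oriented component, after observing that the only candidate links are triangle--triangle and triangle--$4$-cycle pairs. In fact your proposed moment-curve embedding is essentially the paper's FMellor embedding: its triangle--triangle links are exactly the seven alternating pairs (one per $6$-subset), which form a $7$-cycle under the linking relation, matching the paper's list of $21$ links. The problem is that your argument stops precisely where the theorem begins. The claim that every orientation of $K_7$ admits a placement killing all $21$ links is the entire content of the statement, and you explicitly defer it. Nothing established earlier constrains the answer: for instance, the R\'edei Hamiltonian-path placement $v_1 \to v_2 \to \cdots \to v_7$ does not prevent both $\{v_1,v_3,v_5\}$ and $\{v_2,v_4,v_6\}$ from being directed triangles (each needs only one back edge, $v_5 v_1$ and $v_6 v_2$ respectively), and that pair is linked in your embedding. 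The paper's actual proof is a substantial case analysis on the maximum in-degree ($6$, $5$, $4$, $3$) with repeated relabeling arguments; none of that work, nor a substitute for it (such as an executed computer search over the $456$ tournaments), appears in the proposal.

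Moreover, the missing step is not merely unfinished; as you state it, it is very likely false, so a genuinely new idea is needed. In the paper's maximum-in-degree-$4$ case, the analysis arrives at a specific tournament (in the paper's labels: in-neighbors of $7$ are $1,4,5,6$; edges $4\to 2$, $4\to 3$; $2$ and $3$ directed to each of $1,5,6$; $1\to 4$, $5\to 4$, $6\to 4$; $2 \to 3$; and a cyclic triangle on $\{1,5,6\}$) for which the $21$-link embedding retains a consistently oriented non-split link however the remaining freedom is used: the triangle $245$ is directed, and exactly one of the $4$-cycles $1376$, $1736$ is directed (according to the orientation of edge $16$), and each of these is linked with $245$. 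The authors must construct a second, genuinely different embedding (their Figure 2) to dispose of this tournament. Since your embedding has the same combinatorial link pattern, your single-embedding plan hits the same wall, and the ``structural analysis or finite verification'' you gesture at cannot close the proof without such an additional embedding. One smaller point: the appeal to Moon's theorem does not provide the bookkeeping reduction you suggest. The vertex set of a directed $4$-cycle does contain a directed triangle, but the linking need not transfer to it: a chord splits the $4$-cycle into one directed and one transitive triangle, and the linking number can sit entirely on the transitive one. So triangle--$4$-cycle links must be analyzed in their own right, exactly as the paper does.
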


\begin{proof}
Let $T$ be a tournament on 7 vertices.  We will break the proof in to cases based on the maximum in degree of
a vertex in $T$. We will check only the cases of max in degree equal to 6, 5, 4, and 3; as for the other
cases we may consider the vertex of maximum out degree, and apply the same arguments.

\begin{figure}
\includegraphics[scale=0.45]{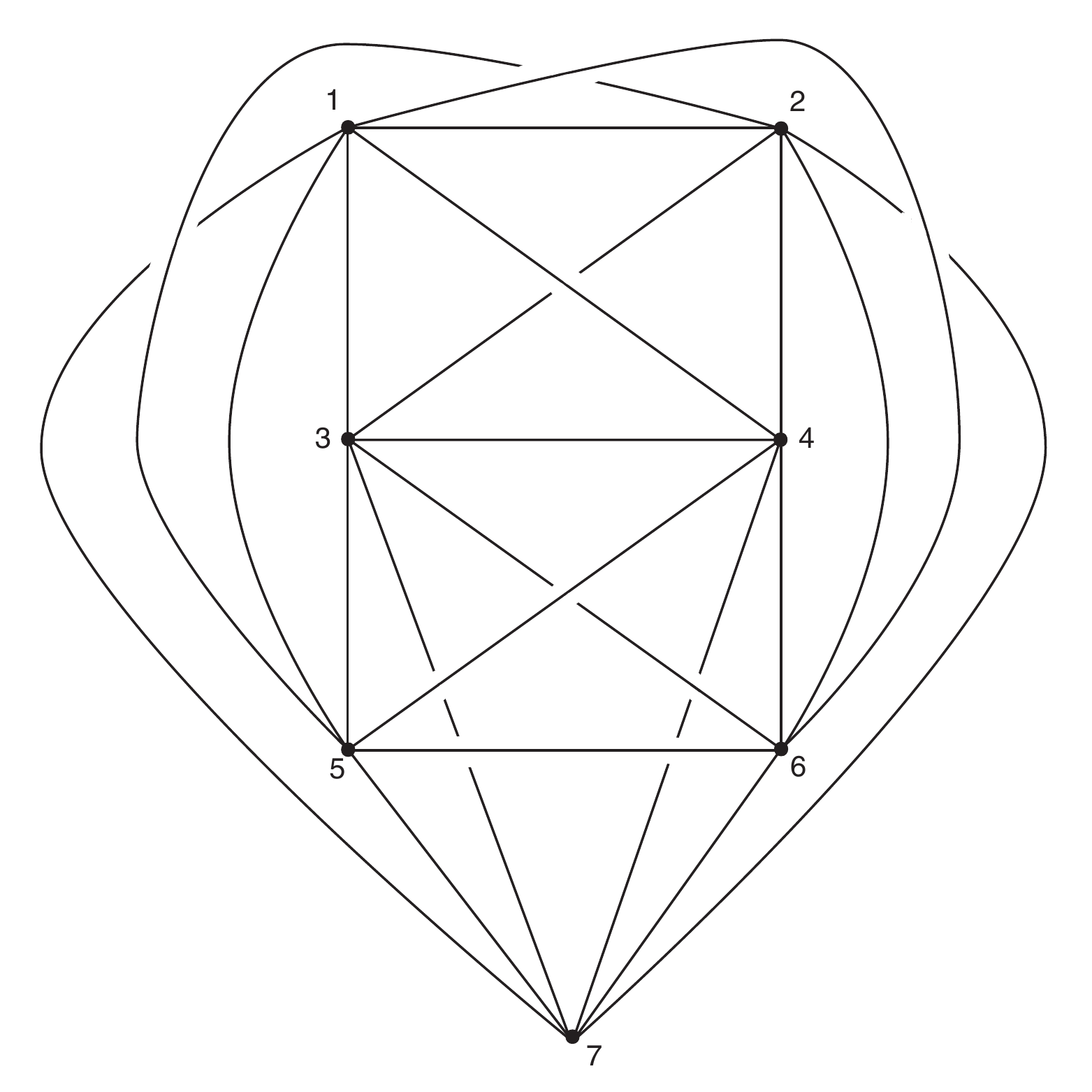}
\caption{The embedding of $K_7$ from \cite{fmellor} with exactly 21 non-split links.} \label{FMellorK7}
\end{figure}

We will demonstrate a linkless embedding in each case, relying primarily upon the embedding from
\cite{fmellor} shown in Figure \ref{FMellorK7} that contains exactly 21 nonsplit links. We will call this the
FMellor embedding.  The links in the FMellor embedding are the following:

457-236 457-136 457-1362 457-1236

147-236 147-235 147-2356 147-2365

167-235 167-245 167-2435 167-2345

136-245 136-2547 136-2457

235-1467 235-1647

245-1376 245-1736

236-1475 236-1547

Suppose $T$ has a vertex $v$ with in degree = 6. This vertex $v$ cannot be contained in a consistently
oriented cycle, so any consistently oriented link must be contained in $T \setminus v$, which is a tournament
on 6 vertices. By Proposition \ref{K_6_nIL_prop}, a tournament on 6 vertices has a linkless embedding.  Thus
$T$ does as well.

Suppose $T$ has a vertex $v$ with in degree = 5. Label this vertex $7$. There is a unique vertex $w$ such
that the edge $7w$ is oriented from $7$ to $w$. Label $w$ as vertex $3$. Then all of the links in the FMellor
embedding have an inconsistently oriented cycle except possibly

136-245

245-1376 245-1736

Label one of the remaining  vertices $2$. Vertex $2$ is adjacent to each of the four remaining unlabeled
vertices, so at least two of these edges must have the same orientation, i.e. either both from $2$ to the
unlabeled vertices, or from the unlabeled vertices to $2$. Label the end point of two edges with matching
orientation $4$ and $5$. Then the cycle $245$ is not consistently oriented, so the FMellor embedding of $T$
contains no consistently oriented nonsplit links.

Suppose $T$ has a vertex $v$ with in degree = 4. Label this vertex $7$. There are four edges oriented from
another vertex to $7$. Label the end point of these edges $1,4,5,6$. Then all of the links in the FMellor
embedding have a cycle with inconsistent orientation except possibly:

136-245 136-2547 136-2457

245-1376 245-1736

If two or more of the edges between $\{1,4,5,6\}$ and $2$ are oriented from $w$ to $2$, then we may rearrange
the labels of $\{1,4,5,6\}$ so that two of these edges are $42$ and $52$.  In this case, $245$ has an
inconsistent orientation, and the cycles $2547$ and $2457$ are inconsistently oriented as well.   Thus, the
FMellor embedding has no consistently oriented non-split links.  As we may switch the labels of vertex $2$
and vertex $3$, at most one edge can be oriented from $\{1,4,5,6\}$ to vertex $3$ as well.

Vertex $2$ has at most one edge oriented from $\{1,4,5,6\}$ to $2$, call it $w2$. Similarly $3$ has at most
one edge oriented from $\{1,4,5,6\}$ to $3$, call it $w'3$.  If no such $w$ or $w'$ exists, or if $w \neq w'$
then we may choose to rearrange the labels $\{1,4,5,6\}$ such that edges $24$ and $25$ are oriented from $2$
to $4$ and from $2$  to $5$, and such that edges $31$ and $36$ are oriented from $3$ to $1$ and from $3$ to
$6$.  Then the cycles 136 and 245 are not consistently oriented and the FMellor embedding has no consistently
oriented non-split links.

So we may assume that $w=w'=4$.   Then cycle 136 is not consistently oriented, and the only possible links in
the FMellor embedding are:

245-1376 245-1736

\begin{figure}
\includegraphics[scale=.6]{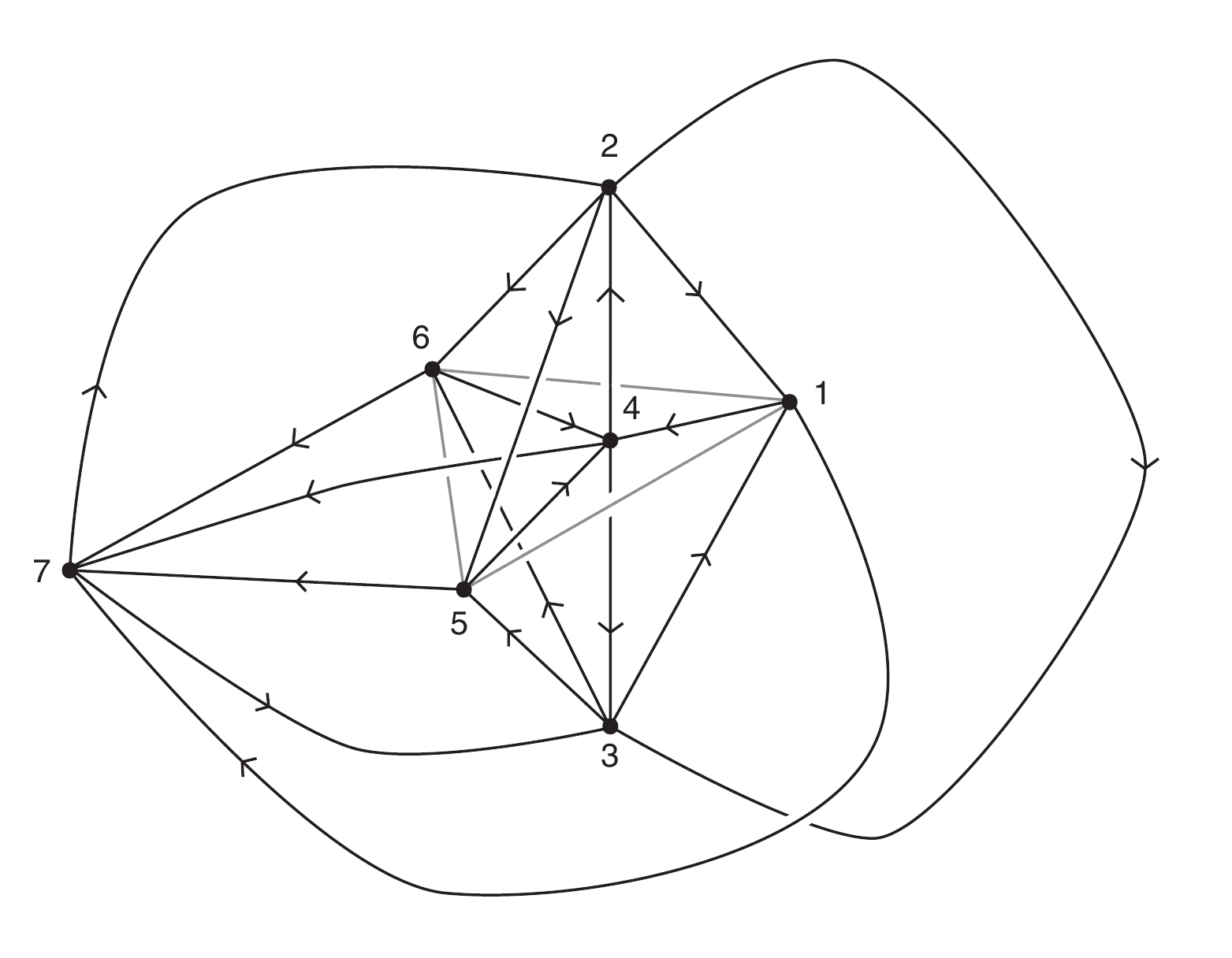}
\caption{An alternate embedding of a directed $K_7$. The gray edges may have any orientation.} \label{AltK7}
\end{figure}

If edge $45$ is oriented from $4$ to $5$, then the cycle 245 is not consistently oriented. So we may assume
that edge $45$ is oriented from $5$ to $4$. As we can exchange labels among $\{1,5,6\}$, we may assume that
edges $41$ and $46$ are oriented to $4$ as well. Thus, we have restricted the orientation of all edges in $T$
except for the triangle $156$, and edge $23$.  Notice that edge $2v$ is oriented from $2$ to $v$ if and only
if $3v$ is oriented from $3$ to $v$ for all $v$, so we may assume edge $23$ is oriented from $2$ to $3$
(exchanging the labels of $2$ and $3$ if necessary).  Consider the embedding $f$ of $T$ shown in Figure
\ref{AltK7}. As any link must be between two 3-cycles or a 3-cycle and a 4-cycle, to show $f(T)$ contains no
consistently oriented non-split links, it suffices to check that each 3-cycle in $f(T)$ is either not
consistently oriented, bounds a disk, or forms a link only with inconsistently oriented cycles.

Any 3-cycle that uses edge $23$ is inconsistently oriented.

Any 3-cycle that contains vertex $7$ is inconsistently oriented or bounds a disk.

Any consistent 3-cycle not using the above bounds a disk, or is $561$.

If $561$ is consistent, all of the cycles with which it forms a non-trivial link are inconsistently
oriented.

Suppose the maximum in degree of a vertex in $T$ is 3.  Note that this implies that all vertices in $T$ have
in degree = out degree = 3.  Choose a vertex, label it $7$. Label the other vertices so that edges $17$, $47$
and $57$ are oriented to $7$.

As each of $6, 2, 3$ have in degree 3, there must be nine edges that terminate on vertices $\{2,3,6\}$. Three
edges from vertex $7$ terminate there, and the three edges that form a $K_3$ on $\{2,3,6\}$ have end points
there as well. Thus, there must be three edges from $\{1,4,5\}$ to $\{2,3,6\}$. Choose one such edge and
rearrange the labels as necessary so that this edge is $16$. Then all of the non-trivial links in the FMellor
embedding have an inconsistently oriented cycle except possibly:

136-245 136-2547 136-2457

235-1467

We break the remainder of the argument into three cases.

Case 1: one or both of $26$ and $36$ are oriented from $2$ to $6$ or $3$ to $6$. As $6$ has in degree 3, and
$76$ and $16$ are oriented to $6$, at most one of $26$ and $36$ are oriented to $6$. Switching labels if
necessary, we may assume edge $36$ is oriented from $3$ to $6$.

Then cycle $136$ is not consistently oriented, so the only remaining potential link is 235-1467. Since $6$
has in degree three, edges $64$, $65$ and $62$ are oriented from $6$ to the other vertex. We have edge $76$
oriented from $7$ to $6$ and edge $64$ oriented from $6$ to $4$.  If $14$ is oriented from $1$ to $4$, then
$1467$ has an inconsistent orientation, so there are no consistently oriented links in the FMellor embedding,
and we are done.  So we may assume $14$ is oriented from $4$ to $1$. As we may switch the labels of vertex
$4$ and vertex $5$, edge $15$ must be oriented from $5$ to $1$ as well.

Similarly, both $234$ and $235$ must be consistently oriented or else we may switch the labels of $4$ and $5$
so that $235$ has an inconsistent orientation, giving a linkless version of the FMellor embedding of $T$. As
$72$ is oriented from $7$ to $2$, and $62$ is oriented from $6$ to $2$, we cannot have both $42$ and $52$
oriented from $4$ and $5$ to $2$.   Thus, we must have $25$ oriented from $2$ to $5$ and $24$ oriented from
$2$ to $4$.  For $234$ and $235$ to be consistently oriented, edge $23$ must be oriented from $3$ to $2$ and
edges $35$ and $34$ must be oriented from $5$ to $3$ and from $4$ to $3$.

We have edges $72$, $62$, and $32$ oriented to vertex $2$.  As vertex $2$ has in degree $3$, this implies
edge $12$ must be oriented from $2$ to $1$.   We have edges $73$, $53$ and $43$ oriented to $3$. As vertex
$3$ has in degree 3, this implies that edge $13$ must be oriented from $3$ to $1$.  This is a contradiction,
as vertex $1$ has in degree 3, but edges $12$, $13$, $14$ and $15$ all terminate at $1$.  Thus, the edge
orientations must be in a configuration that allows a linkless FMellor embedding.

Case 2: We may assume that $26$ is oriented from $6$ to $2$ and $36$ is oriented from $6$ to $3$, as
otherwise we would be in Case 1.  Assume one or both of $12$ and $13$ are oriented from $1$ to $2$ or $1$ to
$3$.  Exchanging labels if necessary, we may assume $13$ is oriented from $1$ to $3$.

Then, the 3-cycle $136$ is not consistently oriented, as edge $36$ is oriented from $6$ to $3$ and edge $13$
is oriented from $1$ to $3$ by assumption. Thus, the only potential link in the FMellor embedding is
235-1467.  As the edge $76$ is oriented from $7$ to $6$, the 4-cycle 1467 is only consistently oriented if
edge $46$ is oriented from $6$ to $4$.  However, as edges $26$ and $36$ are oriented from $6$ to $2$ and from
$6$ to $3$ by assumption, there is exactly one more edge oriented from a vertex $6$ to $v$.  Thus, at least
one of the edges $46$ and $56$ is oriented from $v$ to $6$.  Thus, switching labels of vertices $4$ and $5$
if necessary, we may assume that edge $46$ is oriented from $4$ to $6$, so there are no consistently oriented
links in the FMellor embedding.

Case 3: We may assume that edge $26$ is oriented from $6$ to $2$ and edge $36$ is oriented from $6$ to $3$,
as otherwise we would be in Case 1.   Further, we may assume edge $12$ is oriented from $2$ to $1$ and edge
$13$ is oriented from $3$ to $1$, as otherwise we would be in Case 2.

There are three edges oriented from $\{1,4,5\}$ to $\{6,2,3\}$, one of which is edge $16$. Suppose the other
two are of the form $v6$ and $v'6$, where $v'$ may or may not be equal to $v$.  Then edges $21, 24$ and $25$
are oriented from $2$ to the other vertex, and edges $31, 34$ and $35$ are oriented from $3$ to the other
vertex.  Edge $23$ is oriented from $2$ to $3$ or from $3$ to $2$.  As both vertex $2$ and vertex $3$ have
out degree = 3, either choice of orientation for edge $23$ gives a contradiction.   Thus there exists an edge
$vw$ from $\{1,4,5\}$ to $\{6,2,3\}$ with $w \neq 6$.

As $26$ and $36$ are oriented from $6$ to $2$ and $6$ to $3$ by assumption, edge $w6$ is oriented from $6$ to
$w$.  We may exchange the labels of $w$ and $6$, which gives either $36$ oriented from $3$ to $6$ or $26$
oriented from $2$ to $6$. We may exchange the labels $v$ and $1$ if necessary so that edge $16$ is oriented from $1$ to $6$, and so reduce to Case 1.

\end{proof}

\begin{theorem}
There exists a tournament on 8 vertices that is intrinsically linked as a digraph.
\label{linked_tournament_thm}
\end{theorem}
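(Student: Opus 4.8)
The plan is to exhibit one explicit tournament $T$ on $8$ vertices and then prove that every embedding of $T$ contains a non-split link whose two components are consistently oriented cycles, using a Conway--Gordon--style parity argument adapted to directed cycles. The tool that makes an explicit construction worthwhile is monotonicity: if a directed graph $D'$ is a subgraph of $D$ with matching orientations and $D'$ is intrinsically linked as a digraph, then so is $D$, since any embedding of $D$ restricts to one of $D'$, and the forced link together with the consistent orientation of each component survives in $D$. Thus it suffices to locate inside some $8$-vertex tournament a directed substructure that forces a consistently oriented non-split link in every embedding, and I would build $T$ to contain such a structure with as much symmetry as possible.

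The engine is the Conway--Gordon invariant for $K_6$ of \cite{cg}: for every embedding $f$, the mod $2$ sum $\omega(f)$ of the linking numbers over the ten vertex-disjoint triangle pairs equals $1$. The reason $\omega$ is an embedding invariant is that a single crossing change between two disjoint edges $ab$ and $cd$, with the remaining two vertices $e,f$, alters exactly the two pairs $\{abe,\,cdf\}$ and $\{abf,\,cde\}$, each by $\pm 1$, for a net change of zero mod $2$. First I would imitate this, but count only those pairs in which \emph{both} triangles are directed $3$-cycles, producing a restricted sum $\omega_{\mathrm{dir}}(f)$. Invariance of $\omega_{\mathrm{dir}}$ is no longer free: it holds exactly when, for every such quadruple $a,b,c,d$ with leftover $e,f$, the two pairs $\{abe,\,cdf\}$ and $\{abf,\,cde\}$ are simultaneously counted or simultaneously discarded, i.e.\ the number of these two pairs consisting of two consistently oriented triangles is always even. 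This is a purely combinatorial condition on the chosen orientation, and verifying it is the first concrete task.

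The hard part, and the reason the answer is $8$ rather than $6$, is that this restricted invariant cannot be made \emph{both} well defined and odd using triangles on only six vertices: if it could, Proposition \ref{K_6_nIL_prop} (and Theorem \ref{K7_NIL_thm}) would be violated. So the naive restriction is provably insufficient, and I would enlarge the family of admissible cycles to include consistently oriented $4$-cycles (and possibly distribute the contributing cycles across all eight vertices), re-deriving the corresponding crossing-change bookkeeping for mixed $3$-cycle/$4$-cycle and $4$-cycle/$4$-cycle pairs so that the enlarged directed invariant $\omega_{\mathrm{dir}}$ is again embedding-invariant. The main obstacle is therefore designing the orientation of $T$ so that these two competing demands are met at once: every crossing change must flip an even number of counted pairs (well-definedness), while the total must still be odd. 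I expect the extra room afforded by eight vertices and by the longer directed cycles to be exactly what reconciles them, and the explicit tournament $T$ to be essentially forced by this reconciliation.

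Once such a $T$ is in hand, I would finish by evaluating $\omega_{\mathrm{dir}}$ on a single convenient diagram of $T$—for instance a standard book or near-planar embedding in the spirit of the FMellor embedding of \cite{fmellor}—to confirm the value is odd. An odd value means at least one counted pair has odd linking number, hence is a non-split link with both components consistently oriented, proving that $T$ is intrinsically linked as a digraph and establishing the existence asserted in the theorem.
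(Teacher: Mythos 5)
Your proposal is a program, not a proof, and the gap sits exactly where the theorem's content lies. The monotonicity observation (orient a useful subgraph, extend arbitrarily to a tournament) is correct and is also how the paper finishes. But the engine you propose---a restricted Conway--Gordon invariant $\omega_{\mathrm{dir}}$ summing linking numbers only over pairs of consistently oriented cycles---is never actually constructed: you do not exhibit the tournament, you do not verify the well-definedness condition (that every crossing change flips an even number of counted pairs), and you do not verify that the resulting invariant is odd. The sentence ``I expect the extra room afforded by eight vertices and by the longer directed cycles to be exactly what reconciles them'' is precisely the existence claim to be proved, left as a hope. The bookkeeping is also harder than your sketch suggests: in $K_8$ a crossing change between disjoint edges $ab$ and $cd$ alters the linking number of \emph{every} pair of disjoint cycles with one cycle through $ab$ and the other through $cd$, not just the two pairs that arise in the $K_6$ argument, so the combinatorial design problem you pose (simultaneously achieving invariance and odd total) is substantial and unresolved in your write-up.

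For comparison, the paper sidesteps this entire difficulty by quoting a known refinement rather than rebuilding a parity invariant. It takes the subgraph $H\cong K_{3,3,2}$ on parts $\{a_1,a_2,a_3\}$, $\{b_1,b_2,b_3\}$, $\{x,y\}$ and uses the result of Chan et al.\ \cite{REU} that every embedding of $K_{3,3,2}$ contains a pair of disjoint $3$-cycles with non-zero linking number. Since $K_{3,3,2}$ is tripartite, any $3$-cycle has one vertex in each part, so two disjoint linked triangles are necessarily of the form $xa_ib_j$ and $ya_kb_l$. Orienting $H$ cyclically (from $\{x,y\}$ to the $a_i$, from the $a_i$ to the $b_j$, from the $b_j$ to $\{x,y\}$) makes every such triangle consistently oriented automatically, and arbitrary orientation of the remaining edges of $K_8$ gives the tournament. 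If you want to salvage your approach, the honest version of it is to prove the \cite{REU} statement (which is itself a Conway--Gordon-style parity argument, but over the restricted family of triangle pairs $\{xa_ib_j,\,ya_kb_l\}$, where the invariance bookkeeping does close up); combined with the cyclic orientation this completes the proof, whereas your unrestricted $\omega_{\mathrm{dir}}$ over all consistently oriented $3$- and $4$-cycles of $T$ remains an unverified construction.
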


\begin{proof}
Label the vertices of $K_8$ as $\{a_1, a_2, a_3, b_1, b_2, b_3, x, y\}$.  Consider the subgraph $H$ of $K_8$
isomorphic to $K_{3,3,2}$ formed by choosing the vertex partitions $\{a_1, a_2, a_3\}, \{b_1, b_2, b_3\}$ and
$\{x, y\}$.  Orient the edges of $H$ as follows:
 from $x$ and $y$ to $a_i$, from $b_j$ to $x$ and $y$, and from $a_i$ to $b_j$.

Every embedding of $K_{3,3,2}$ contains a pair of disjoint 3-cycles that have non-zero linking number
\cite{REU}.  As these 3-cycles are disjoint, they must be of the form $xa_ib_j$ and $ya_kb_l$.  By the
construction of $H$, these cycles are consistently oriented.  Choosing an
 arbitrary orientation for all edges of $K_8 \setminus H$ gives a tournament that is intrinsically linked as
 a directed graph.
\end{proof}

\section{Intrinsic Knotting in Tournaments}

\begin{prop} No tournament on 7 vertices is intrinsically knotted as a digraph.
\label{unknotted_7_verts}
\end{prop}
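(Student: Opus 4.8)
The plan is to exhibit, for an arbitrary tournament $T$ on $7$ vertices, a single embedding of the underlying $K_7$ in which no consistently oriented cycle is knotted. The observation that organizes everything is a length restriction: since the stick number of the trefoil is $6$ and that of the figure-eight is $7$, a knotted cycle in a $7$-vertex graph must have length $6$ or $7$. A consistently oriented cycle is exactly a directed cycle of $T$, so it suffices to produce an embedding $f$ of $K_7$ in which every directed $6$-cycle and every directed Hamiltonian cycle of $T$ is unknotted.

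I would work with a fixed concrete embedding $f$ of $K_7$ whose knotted cycles are completely enumerated, choosing $f$ to minimize their number. By Conway--Gordon \cite{cg} an odd number of the Hamiltonian cycles of any embedding are knotted, so knots cannot be avoided outright; the goal is an embedding whose only knotted cycle is a single Hamiltonian trefoil $C$ and in which no $6$-cycle is knotted. Granting such an $f$, the argument is short. Relabeling the vertices of $T$ onto $f$ amounts to choosing a bijection between $V(T)$ and the vertices of $f$, and the knottedness of each position-cycle depends only on $f$, not on this bijection. Thus it is enough to choose the labeling so that the one knotted cycle $C$ is not a directed cycle of $T$. This is always possible: if $C$ were a directed cycle under every labeling, then every cyclic ordering of $V(T)$ would be a directed Hamiltonian cycle, which is false. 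Starting from any consistently oriented ordering $u_1 \to u_2 \to \cdots \to u_7 \to u_1$, transposing $u_1$ and $u_2$ produces an ordering that is consistent in neither cyclic direction, the edge $u_1u_2$ obstructing one direction and $u_3u_4$ the other. With $C$ made inconsistent and every other cycle of $f$ unknotted, $f(T)$ contains no consistently oriented knotted cycle.

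The main obstacle is supplying the embedding $f$ with exactly one knotted (Hamiltonian) cycle and \emph{no} knotted $6$-cycle; verifying this second condition is the delicate point, since a priori a minimal-knot embedding could still contain a knotted hexagon. I would address it either by exhibiting such an embedding explicitly and listing its knotted cycles, in the spirit of the proof of Theorem~\ref{K7_NIL_thm}, or by falling back on casework on the maximum in-degree of $T$. In the fallback, a vertex of in-degree $6$ (a sink) lies on no directed cycle, so all consistently oriented cycles live in the $6$-vertex subtournament $T\setminus v$ and have length at most $6$; placing the sink at a vertex of $f$ whose opposite $6$-vertex sub-embedding is knotless then finishes that case, with the out-degree $6$ case handled by reversing all orientations. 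The residual balanced case, in which every vertex has in-degree and out-degree $3$, is where a single labeling must simultaneously defeat several knotted cycles, and its high symmetry is what I expect to make the bookkeeping -- rather than the underlying idea -- the hardest part.
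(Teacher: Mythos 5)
Your proposal is correct and takes essentially the same route as the paper: place $T$ in the Conway--Gordon embedding of $K_7$, whose unique knotted cycle is a Hamiltonian trefoil, and if that cycle happens to be consistently oriented, re-place (relabel) $T$ so that two adjacent vertices of the cycle are transposed, which makes it inconsistent in both directions. The ``delicate point'' you flag --- ruling out knotted $6$-cycles --- is already covered by the fact the paper cites from \cite{cg}: that embedding has exactly one knotted cycle among \emph{all} cycles, not merely among Hamiltonian ones, so no fallback casework is needed.
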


\begin{proof}
By \cite{cg} there is an embedding $f$ of $K_7$ that contains a single knotted cycle $c$, and further, $c$ is
a Hamiltonian cycle. Given an orientation on $K_7$, put it in the Conway-Gordon embedding $f$.

Suppose $c$ is consistently oriented.   Then we may label the vertices of $K_7$ so that $c$ is the cycle
$1234567$.  Due to the symmetry of $K_7$ we may also place $K_7$ into the Conway-Gordon embedding so that $c$
is $1234576$.  In this embedding $c$ has an inconsistent orientation, and hence this embedding contains no
consistently oriented cycle that is a non-trivial knot.  Thus any orientation of $K_7$ yields a tournament
that is not intrinsically knotted as a directed graph.

\end{proof}

\begin{prop}
No tournament on 8 vertices is intrinsically knotted as a digraph. \label{unknotted_prop}
\end{prop}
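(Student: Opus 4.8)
The plan is to show that every orientation $T$ of $K_8$ admits an embedding containing no consistently oriented nontrivial knot. Since a knotted cycle requires at least six vertices, I need to control only cycles of length $6$, $7$, and $8$. The overall strategy mirrors Proposition \ref{unknotted_7_verts}: fix a favorable embedding of $K_8$ whose knotted cycles are understood, and then exploit the freedom to relabel vertices (using the symmetry of the embedding together with the edge orientations of $T$) so that each knotted cycle acquires an inconsistently oriented edge and hence cannot contribute a consistently oriented knot.

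First I would try to reduce to the seven-vertex case already settled. Deleting a vertex $v$ yields a tournament $T \setminus v$ on seven vertices, which by Proposition \ref{unknotted_7_verts} has an embedding $g$ with no consistently oriented knot. If $v$ can be chosen to be a source or a sink of $T$, then no consistently oriented cycle passes through $v$ at all, so extending $g$ to an embedding of $K_8$ by attaching $v$ (its edges routed in a half-space disjoint from $g(K_7)$) produces no new consistently oriented cycles, and the proposition follows in this case. As in Theorem \ref{K7_NIL_thm}, the remaining configurations would be organized by the maximum in-degree of a vertex of $T$; since the $28$ edges force some vertex to have in-degree at least $4$, and since we may pass to the vertex of maximum out-degree when convenient, it suffices to treat in-degrees $7,6,5,4$, with in-degree (equivalently out-degree) $7$ giving the source/sink case above.

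The hard part is the case where $T$ has neither a source nor a sink, so that every vertex lies on some consistently oriented cycle. Here a consistently oriented cycle through the added vertex $v$ runs $v \to o \to \cdots \to i \to v$ for an out-neighbor $o$ and in-neighbor $i$ of $v$, and its knot type is governed by the sub-path from $o$ to $i$ inside $g(K_7)$. The obstacle is that such a sub-path can be a knotted ball-arc even when $g(K_7)$ contains no consistently oriented knotted cycle, so naively coning $v$ onto $g(K_7)$ need not unknot the cycles through $v$; the seven-vertex reduction alone does not suffice.

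To overcome this I would instead fix one explicit embedding of $K_8$ --- for instance the Conway--Gordon embedding of $K_7$ on seven of the vertices together with a carefully placed eighth vertex --- and determine its complete list of nontrivially knotted cycles, exactly as the FMellor embedding supplies an explicit list of links in the proof of Theorem \ref{K7_NIL_thm}. With this finite list in hand, I would carry out the in-degree case analysis, in each case relabeling the vertices of $K_8$, and invoking the symmetry of the chosen embedding (as in Proposition \ref{unknotted_7_verts}, to interchange a pair of adjacent vertices along a Hamiltonian knot), so that every listed knotted cycle contains two consecutively traversed edges of opposite orientation. Establishing the list of knotted cycles and verifying that the relabelings always break the consistency of each of them is the crux of the argument, and the step I expect to be the most delicate.
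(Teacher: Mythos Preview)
Your plan is exactly the paper's approach: fix an explicit embedding of $K_8$ whose knotted cycles are completely enumerated, reduce by maximum in-degree to the cases $7,6,5,4$, dispose of in-degree $7$ via Proposition~\ref{unknotted_7_verts}, and in the remaining cases relabel vertices so that every listed knotted cycle is inconsistently oriented. What you leave open---and correctly flag as the crux---is precisely what the paper supplies: the embedding is the one of Abrams, Mellor and Trott \cite{mellor2}, which has exactly $29$ knotted cycles (all of length $7$ or $8$, listed explicitly). The execution is then cleaner than you anticipate. No symmetry of the embedding is invoked; instead, a single labeling scheme handles in-degrees $4$, $5$, and $6$ simultaneously: label the vertex of maximal in-degree as $1$, an out-neighbor of $1$ as $6$, an out-neighbor of $6$ lying among the in-neighbors of $1$ (which exists by maximality of the in-degree of $1$) as $4$, two further in-neighbors of $1$ as $8$ and $5$ with the edge running $8\to 5$, and another in-neighbor of $1$ as $7$. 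The six forced orientations $4\to 1$, $5\to 1$, $7\to 1$, $8\to 1$, $8\to 5$, $6\to 4$ already render all $29$ listed cycles inconsistent, so no further case analysis is required.
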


\begin{proof}

In \cite{mellor2}, Abrams, Mellor and Trott demonstrate an embedding of $K_8$ with exactly 29 knotted cycles,
which is shown in Figure \ref{AMT_K8}. We will refer to this as the AMT embedding.  The knotted cycles in the AMT embedding are:

           1462375
           1462385
           1468237
           1468537
           1472385
           1586237
           1586247
           2468537
           15862347
           15862437
           15836247
           14762385
           14723685
           14672385
           15732648
           14623875
           14623785
           12468537
           14682537
           14685237
           15486237
           12735864
           16427358
           13586247
           13724685
           14568237
           14628357
           14682375
           14723856

\begin{figure}
\includegraphics[scale=.6]{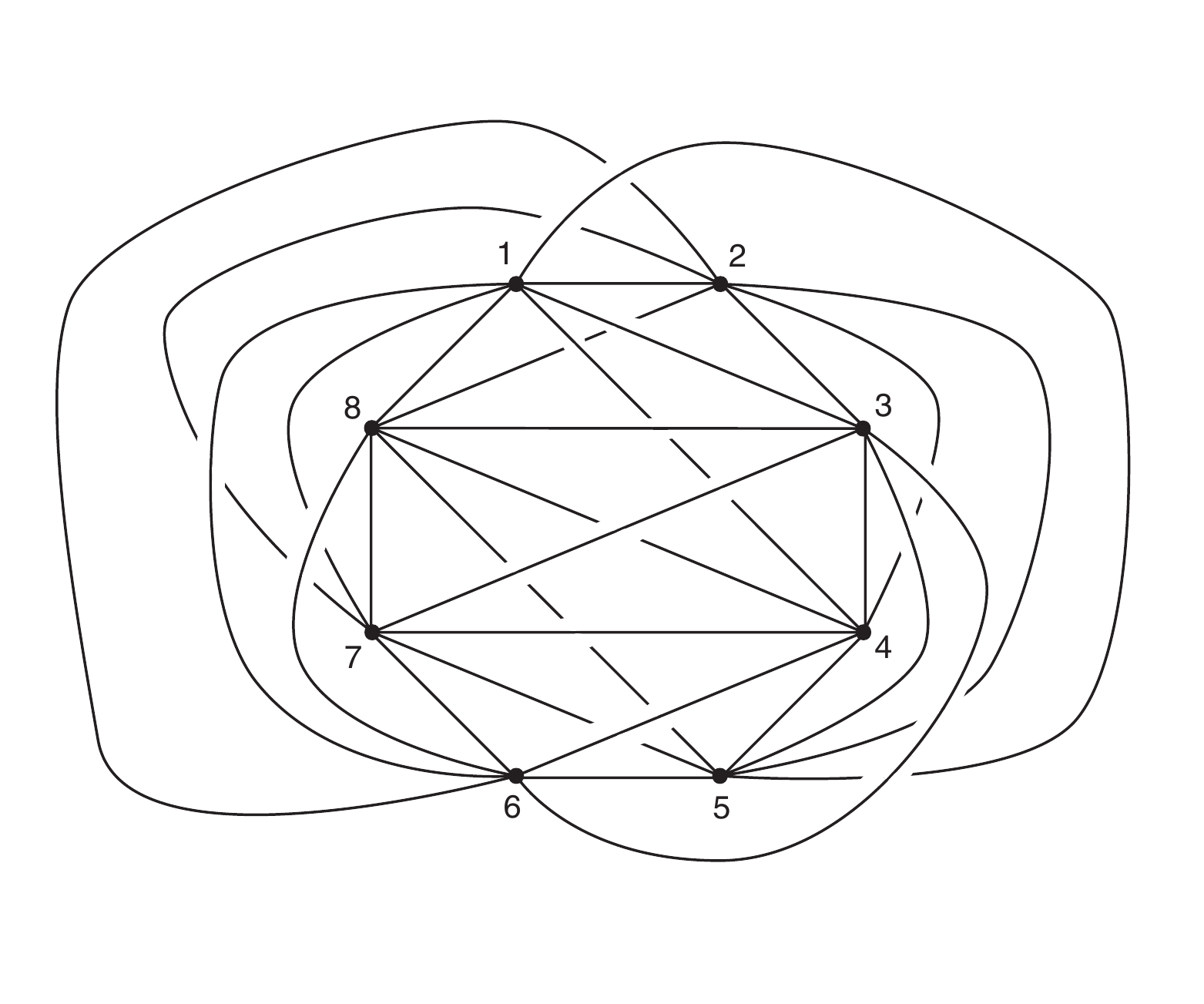}
\caption{The embedding of $K_8$ from \cite{mellor2} that has exactly 29 knotted cycles.} \label{AMT_K8}
\end{figure}

Given a tournament $T$ on 8 vertices, we will show that it can be placed in the AMT embedding such that all
of the knotted cycles are inconsistently oriented. We will break the proof in to cases based on the maximum
in degree of a vertex in the tournament. We need only check the cases of max in degree 7, 6, 5, and 4, as in
the other cases we may repeat the argument using maximum out degree.

Suppose there is a vertex $v$ in $T$ with in degree = 7.   Then no consistently oriented cycle can contain
vertex $v$, and so any consistently oriented cycle must be contained in $T \setminus v$, which is a  tournament on seven vertices.   By Proposition \ref{unknotted_7_verts} a tournament on seven vertices has
an embedding with no consistently oriented cycle that forms a non-trivial knot.  Thus $T$ does as well, and hence $T$ is
not intrinsically knotted as a directed graph.

Suppose that $v$ is a vertex of maximal in degree, with in degree 4, 5, or 6.   Label $v$ as vertex $1$.
There is a vertex $6$ such that the edge $16$ is oriented from $1$ to $6$.  Let $W$ be the set of vertices
where edge $w_i 1$ is oriented from $w_i$ to $1$.  As $1$ has maximal in degree, there exists some $w_i$ such
that the edge $6w_i$ is oriented from $6$ to $w_i$.   Label this $w_i$ as vertex $4$.  There are at least 3
more elements of $W$.    There is an edge between $w_1$ and $w_2$, label them $5$ and $8$ so that the edge
$58$ is oriented from $8$ to $5$. Label one of the remaining elements of $W$ as vertex $7$.

Embed $T$ in the AMT embedding shown in Figure \ref{AMT_K8}. We have edge orientations $41$ $51$ $71$ $81$
$85$ and $64$. With these orientations, all of the knotted cycles in the AMT embedding have inconsistent
orientations, and hence $T$ has an embedding that does not contain a consistently oriented nontrivial knot,
making $T$ not intrinsically knotted as a directed graph.

\end{proof}

One method to prove a graph is intrinsically knotted is to find a $D_4$ graph minor in every embedding that
satisfies certain linking conditions for its cycles \cite{ty} \cite{foisy}.  The authors extended this
technique to directed graphs in \cite{flemfoisy}.  However, in the directed graph case, the $D_4$ must
satisfy certain edge orientation conditions and rather than being a minor, it must be found by deleting edges
and identifying vertices through an operation called \emph{consistent edge contraction}. Vertices $v$ and $w$
may be identified by consistent edge contraction if edge $vw$ is oriented from $v$ to $w$ and either $v$ is a
sink in $G \setminus vw$ or $w$ is a source in $G \setminus vw$.

\begin{prop}
There exists a tournament on 12 vertices that is intrinsically knotted as a digraph. \label{knotted_prop}
\end{prop}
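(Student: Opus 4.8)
The plan is to invoke the directed $D_4$ criterion developed in \cite{flemfoisy}: if a directed graph $G$ can be reduced, by deleting edges and performing consistent edge contractions, to an appropriately oriented copy of $D_4$ whose distinguished pairs of disjoint cycles are forced to link in every embedding, then $G$ is intrinsically knotted as a digraph. Accordingly, I would (i) exhibit an explicit orientation of $K_{12}$, (ii) locate within it a collection of vertex-disjoint linking gadgets whose consistently oriented cycles are forced to link in every embedding, and (iii) use the remaining vertices as sources and sinks so that consistent edge contraction assembles these gadgets into an oriented $D_4$ meeting the hypotheses of the criterion.

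First I would fix the linking engine. As in the proof of Theorem~\ref{linked_tournament_thm}, an oriented $K_{3,3,2}$ of the prescribed form guarantees, in every embedding, a pair of disjoint consistently oriented $3$-cycles of the shape $xa_ib_j$, $ya_kb_\ell$ with nonzero linking number. I would build the $12$-vertex tournament so that it contains several such oriented gadgets positioned exactly where the $D_4$ criterion requires linked cycles, choosing the orientations so that all the relevant cycles are consistently oriented and, where the mod-$2$ form of the criterion is used, arranging that the forced linking numbers are odd.

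Next I would handle the contraction bookkeeping. The $D_4$ pattern is obtained not as a topological minor but by consistent edge contraction, which is legal across an edge $vw$ (oriented $v \to w$) only when $v$ is a sink or $w$ is a source in the graph with that edge removed. I would therefore reserve a few of the $12$ vertices as gluing vertices, orienting their incident edges so that each becomes a genuine source or sink of the appropriate residual subgraph; contracting along these edges then identifies the gadget cycles into the longer consistently oriented cycles of $D_4$ without destroying any orientation consistency. Because the criterion must hold in \emph{every} embedding, the linking hypotheses would be verified structurally, via the unavoidable-linking property of the $K_{3,3,2}$ pieces (or a Sachs-type $K_6$ argument), rather than for a single chosen embedding.

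I expect the main obstacle to be the simultaneous orientation bookkeeping: a single tournament orientation of $K_{12}$ must at once make every linking gadget produce consistently oriented, forced-to-link cycles, and make each gluing vertex the correct source or sink so that all the consistent edge contractions are valid and yield consistently oriented $D_4$ cycles. Reconciling these constraints—especially guaranteeing the oddness of the forced linking numbers and checking that no contraction introduces an inconsistency along the assembled cycles—is the delicate part; the topological input (the directed $D_4$ knotting criterion of \cite{flemfoisy} and the unavoidable linking of $K_{3,3,2}$) is then applied essentially as a black box.
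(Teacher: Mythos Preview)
Your high-level plan --- build a tournament in which the directed $\overline{D_4}$ criterion of \cite{flemfoisy} is forced in every embedding --- is exactly the paper's strategy. But your concrete implementation will not fit into $12$ vertices, and the piece you dismiss as ``orientation bookkeeping'' is in fact the substantive content of the proof.

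The vertex count is the first problem. The $\overline{D_4}$ criterion needs \emph{two} linked pairs of cycles, say $(T_1,T_3)$ and $(T_2,T_4)$, with the cycles in each pair disjoint. A single oriented $K_{3,3,2}$ already uses $8$ vertices; there is no way to house a second linking gadget and keep the four cycles suitably disjoint inside $12$ vertices, nor is there room to ``reserve a few vertices as gluing vertices''. The paper avoids this by using two \emph{smaller} and overlapping gadgets: a $K_6$ on $x_1,x_2,x_3,y_1,y_2,y_3$ (forcing a linked pair $T_1,T_3$ of $3$-cycles via Sachs/Conway--Gordon) and a $K_{3,3,1}$ on $y_3,a_1,a_2,a_3,b_1,b_2,b_3$ (forcing a linked $3$-cycle/$4$-cycle pair through the preferred vertex $y_3$), sharing exactly the vertex $y_3$. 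The $4$-cycle is then split by an internal $b_jb_k$ edge into two triangles, one of which (call it $T_2$) still links $T_4$ with odd linking number. No extra gluing vertices are used; all $12$ vertices lie in the gadgets.

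The second problem is that the ``bookkeeping'' you defer is where the argument actually lives. Since you do not control \emph{which} pair of triangles in the $K_6$ links, you must check that for \emph{every} possible linked pair $T_1,T_3$ the triangles have the right source/sink structure: the paper runs a short case analysis showing that one can always label so that $y_3$ is the sink of $T_3$, the sink of $T_1$ lies in $\{x_3,y_1,y_2\}$, and the sources of $T_1,T_3$ lie in $\{x_1,x_2,x_3,y_1\}$. On the $K_{3,3,1}$ side the orientation is chosen so that $y_3$ is the source of $T_4$, some $b_n$ is its sink, and the split $4$-cycle yields $T_2$ with an $a_i$ source and a $b_k$ sink. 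The global orientation of the remaining $K_{12}$ edges (from $b_i$ to every $x_j$ and to $y_1$; from $x_3,y_1,y_2,y_3$ to every $a_j$) is then chosen precisely so that, whatever cycles arise, there is always a single directed edge from the sink of each $T_i$ to the source of $T_{i+1}$. Those edges, not extra vertices, are what get contracted. Without this case analysis and the matching edge-orientation scheme, the consistent edge contractions you need may simply fail to exist.
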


\begin{proof}

We will choose an orientation of the edges of $K_{12}$ so that every embedding of the resulting tournament $T$ allows the construction of the $\overline{D_4}$ graph from Corollary 3.1 of \cite{flemfoisy}, and hence must be intrinsically knotted
as a digraph.

Label the 12 vertices $x_1, x_2, x_3, y_1, y_2,y_3,a_1, a_2,a_3,b_1,b_2,b_3$. Orient the edges from $x_i$ to
$y_j$, from $x_i$ to $x_j$ for $j>i$, and from $y_i$ to $y_j$ for $j>i$. Orient the edges from $y_i$ to
$a_j$, from $x_3$ to $a_i$, from $y_3$ to $b_i$, from $a_i$ to $b_j$, from $b_i$ to $x_j$, and $b_i$ to
$y_1$. Choose arbitrary orientations for the remaining edges.

Fix an embedding of $T$. Consider the subgraph formed by the $x_i$ and $y_j$. The underlying graph is $K_6$,
so it contains a pair of 3-cycles $T_1$, $T_3$ with odd linking number. Up to switching the labels of $T_1$
and $T_3$, there are three cases:

$T_1$ is $x_1x_2x_3$. In this case, $x_1$ is a source in $T_1$, $x_3$ is a sink in $T_1$, $y_1$ is a source
in $T_3$, and $y_3$ is a sink in $T_3$.

$T_1$ is $x_i,y_1,y_2$. In this case, $x_i$ is a source in $T_1$, $y_2$ is a sink in $T_1$, $x_j$ is a source
in $T_3$, and $y_3$ is a sink in $T_3$.

$T_1$ is $x_ix_jy_k$ with $k \neq 3$ and $j>i$.  Then $x_i$ is a source in $T_1$, $y_k$ is a sink in $T_1$,
$x_m$ is a source in $T_3$ and $y_3$ is a sink in $T_3$.

So in all cases, $y_3$ is a sink in $T_3$, the sink of $T_1$ is one of $x_3, y_1, y_2$ and the sources of
$T_1$ and $T_3$ are elements of $\{x_1, x_2,x_3,y_1\}$.

Consider the subgraph formed by the vertices $y_3, a_i, b_j$. This graph contains $K_{3,3,1}$, with $y_3$ as
the preferred vertex, and $a_i$, $b_j$ as the other partitions. Thus the embedding of $T$ contains a 3-cycle
$T_4$ and a 4-cycle $S$ with odd linking number, where the 3-cycle is $y_3a_mb_n$ and the 4-cycle is of the
form $a_ib_ja_kb_l$. In the 3-cycle, $y_3$ is a source and $b_m$ is a sink. In the 4-cycle, the $a$ vertices
have out degree 2, and the $b$ vertices have in degree 2.  There is an edge between $b_j$ and $b_k$, we may
assume it is oriented from $b_j$ to $b_k$. This edge divides $S$ into two 3-cycles, $T_2$ and $T_2'$, at
least one of which has odd linking number with $T_4$. We may assume it is $T_2$. Note that $a_i$ is a source
in $T_2$ and $b_k$ is a sink.

We may now construct the $\overline{D_4}$, from $T_1, T_2, T_3,$ and $T_4$. Note that $y_3$ is the sink of
$T_3$ and the source in $T_4$. The sink of $T_4$ is $b_i$ for some $i$, and the source of $T_1$ is one of
$x_1, x_2, x_3$. By construction, there is an edge oriented from $b_i$ to the source of $T_1$. Similarly, the
sink of $T_1$ is one of $x_3, y_1, y_2$ and the source of $T_2$ is one of the $a_i$, and so by construction
there is an edge from the sink of $T_1$ to the source of $T_2$. The sink of $T_2$ has an edge to any possible
source of $T_3$.

We now delete all edges except those in $T_i$ and the edges we have chosen connecting the $T_i$. Using
consistent edge contraction, we may contract edges to complete the construction of the $\overline{D_4}$. Thus
$T$ contains a consistently oriented nontrivial knot in any embedding, and hence is intrinsically knotted as
a digraph.

\end{proof}

\section{Intrinsic 3-Linking in Tournaments}

\begin{prop}
No tournament on 9 vertices is intrinsically 3-linked as a digraph.
\end{prop}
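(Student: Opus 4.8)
The plan is to show that no tournament on $9$ vertices is intrinsically $3$-linked by exhibiting, for each such tournament, an embedding containing no consistently oriented non-split $3$-component link. The natural strategy mirrors the approach already used for Theorems \ref{K7_NIL_thm} and \ref{unknotted_prop}: fix a known embedding of $K_9$ with an explicit, finite list of its non-split $3$-component links, and then argue that any orientation of $K_9$ can be placed into this embedding so that every link in the list contains at least one inconsistently oriented cycle. The crucial input is that a $3$-component non-split link in $K_9$ necessarily uses only $9$ vertices across three disjoint cycles, so each component is short (a $3$-cycle uses exactly $3$ vertices, and the three components together can use at most $9$), which tightly constrains the relevant cycles.

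First I would select a reference embedding of $K_9$ whose non-split $3$-component links are completely enumerated in the literature (analogous to the FMellor embedding of $K_7$ and the AMT embedding of $K_8$ used above); if no published embedding with a short, exhaustive list is available, I would instead reduce to a smaller graph. The cleanest reduction uses the maximum-in-degree casework already established as the paper's template: a vertex $v$ of in degree $8$ lies in no consistently oriented cycle, so every consistent link lives in $T \setminus v$, a tournament on $8$ vertices. Since intrinsic $3$-linking requires at least three disjoint cycles, and a tournament on $8$ vertices cannot even support the relevant configuration once a vertex is removed, I would push this observation as far down the degree sequence as possible, handling in degrees $8, 7, 6, 5$ (and covering the complementary cases by the out-degree symmetry), leaving only the balanced case where every vertex has in degree equal to out degree equal to $4$.

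The key combinatorial step in each case is labeling the vertices so that a forced edge orientation kills every candidate link. For a vertex $v$ of high in degree, I would label $v$ and one of its in-neighbors, then observe that any $3$-cycle through $v$ using two in-edges is automatically inconsistent; combined with the enumerated link list, this eliminates all links except a small residual family sharing a common structure. I would then use a counting argument on the remaining unlabeled vertices—of the type ``$v$ has $k$ edges to a set of size $\ell > k$, so at least $\ell - k$ are oriented outward''—to force a repeated orientation pattern that breaks one cycle in each surviving link, exactly as the in-degree-$4$ and in-degree-$5$ subcases of Theorem \ref{K7_NIL_thm} are handled. The relabeling freedom (swapping symmetric vertex labels in the embedding) is what makes these forcing arguments succeed.

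The hard part will be the balanced regular case where every vertex has in degree $4$ and out degree $4$, since here no single vertex can be discarded and the degree-counting shortcuts are weakest. I expect this case to require a delicate analysis, tracking the orientations of edges among the unlabeled vertices and repeatedly invoking label-exchange symmetries of the chosen embedding to route around each potential consistently oriented link, much as Case 3 in the proof of Theorem \ref{K7_NIL_thm} reduces back to an earlier case by exchanging vertex labels. The main obstacle is that with three components to disrupt simultaneously, rather than two, the bookkeeping is substantially heavier; I anticipate needing either a well-chosen embedding whose residual link list is very short after the first labeling, or a final contradiction argument showing that the in/out degree constraints at several vertices cannot be met simultaneously, paralleling the degree-count contradiction that closes Case 1 above.
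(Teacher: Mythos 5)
Your proposal is not a proof but a plan, and it leaves its hardest parts explicitly unfinished (``I expect this case to require a delicate analysis,'' ``I anticipate needing\ldots''). More importantly, it misses the single fact that makes all of this machinery unnecessary: by Flapan, Naimi, and Pommersheim \cite{fnp}, $K_9$ is not intrinsically $3$-linked even as an \emph{undirected} graph --- the smallest intrinsically triple-linked complete graph is $K_{10}$. So there is an embedding of $K_9$ containing no non-split $3$-component link whatsoever. Place any tournament on $9$ vertices into that embedding; since there is no $3$-link at all, there is certainly no consistently oriented one, and the proposition follows in one sentence. This is exactly the paper's proof. Your first step (``select a reference embedding of $K_9$ whose non-split $3$-component links are completely enumerated'') would, if pursued, have discovered that the enumeration is empty, terminating the argument immediately; instead you committed to an orientation-by-orientation case analysis modeled on the $K_7$ and $K_8$ proofs, which is only needed in those cases precisely because $K_7$ and $K_8$ \emph{are} intrinsically linked and knotted (respectively) as undirected graphs, so no link-free or knot-free embedding exists there.

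A secondary issue: even as a plan, your casework skeleton has a soft spot. The in-degree-$8$ case works (the remaining $8$ vertices cannot carry three disjoint cycles), but your claim that you can ``push this observation as far down the degree sequence as possible, handling in degrees $8,7,6,5$'' is not supported by any argument --- a vertex of in-degree $7$, $6$, or $5$ can certainly lie on consistently oriented cycles, so no vertex deletion is available, and you give no mechanism to handle these cases beyond hoping the $K_7$-style forcing generalizes. Since the one-line argument above exists, none of this needs to be repaired, but as written the proposal neither is complete nor contains the idea that completes it.
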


\begin{proof}
$K_9$ has an embedding with no 3-link \cite{fnp}, so any 9 vertex tournament has an embedding with no
non-split consistently oriented 3 component link.
\end{proof}

\begin{prop} There exists a tournament on 23 vertices that is intrinsically 3-linked as a digraph.
\label{23_3-linked_prop}
\end{prop}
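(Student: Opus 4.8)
The plan is to mimic the proof of Theorem~\ref{linked_tournament_thm}: exhibit inside $K_{23}$ a complete multipartite subgraph $H$ whose underlying undirected graph is intrinsically 3-linked, orient the edges of $H$ so that the forced link components are automatically consistently oriented, and then orient all remaining edges of $K_{23}$ arbitrarily. Since intrinsic 3-linking is inherited by any graph containing $H$ as a subgraph, every embedding of the resulting tournament $T$ will contain a non-split three-component link lying in $H$; all of the real work is to arrange the orientation of $H$ so that each of its three components is a consistently oriented cycle.

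The orientation I would use is a cyclic three-level scheme. Partition the $23$ vertices into three blocks $L_0, L_1, L_2$, take $H = K_{a,b,c}$ to be the complete tripartite graph between these blocks (with $a+b+c = 23$), and orient every edge of $H$ forward around the cycle $L_0 \to L_1 \to L_2 \to L_0$; the within-block edges do not belong to $H$ and may be oriented arbitrarily. From a vertex of $L_i$ the only outgoing $H$-edges go to $L_{i+1}$, so every directed cycle of $H$ must increment the block index modulo $3$ at each step and hence pass through the blocks in the order $L_0, L_1, L_2, L_0, \dots$. In particular every triangle of $H$ has exactly one vertex in each block and is therefore consistently oriented. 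Thus it suffices to choose $a,b,c$ so that $K_{a,b,c}$ is intrinsically 3-linked and, moreover, so that in every embedding the three components of the forced non-split link may be taken to be pairwise disjoint triangles. I would draw this multipartite intrinsic 3-linking statement from the complete-multipartite results of \cite{REU} (the source used for the $K_{3,3,2}$ linking in Theorem~\ref{linked_tournament_thm}) or from the $n$-linking results of \cite{ffnp}.

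Granting such a tripartite graph, the argument closes exactly as before: orienting $K_{a,b,c}$ cyclically and the remaining edges of $K_{23}$ arbitrarily yields a tournament $T$ in which, for every embedding, the three guaranteed disjoint triangles form a non-split link and each triangle is consistently oriented, so $T$ is intrinsically 3-linked as a digraph. The first concrete step is therefore to pin down admissible part sizes $a,b,c$ summing to $23$, which is where the specific number in the statement should come from.

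The main obstacle I anticipate is matching the orientation to the components of the forced link. The cyclic scheme makes triangles consistent for free, but the underlying intrinsic 3-linking theorem may only guarantee a non-split link whose components are longer cycles, and a longer cycle of $K_{a,b,c}$ need not respect the cyclic block order (e.g.\ a four-cycle reusing two vertices of one part reverses direction and is inconsistent). I would handle this either by selecting a version of the 3-linking result whose components are explicitly triangles, as in the $K_{3,3,2}$ case, or, failing that, by an argument analogous to the edge-splitting step in Proposition~\ref{knotted_prop} that replaces each non-triangular component by a consistently oriented cycle carrying the same linking data. Verifying that $23$ vertices genuinely suffice for such a triangle-component 3-linking result, rather than the much larger part sizes a crude estimate would demand, is the quantitatively delicate point.
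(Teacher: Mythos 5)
There is a genuine gap, and it sits exactly where you flagged it: your entire argument rests on the existence of a complete tripartite graph on $23$ vertices that is intrinsically $3$-linked \emph{with all three components of the forced non-split link being transversal triangles} (one vertex in each part). No such theorem is available. The reference \cite{REU} gives only the two-component $K_{3,3,2}$ result (plus disjoint-linking constructions), and \cite{ffnp}, \cite{fnp} concern complete graphs; moreover, in every known proof of intrinsic $3$-linking the third component is produced by a homology/pigeonhole argument as a \emph{sum} of two cycles, not as a triangle. This is the heart of the difficulty in the directed setting: a sum of two consistently oriented cycles is in general not consistently oriented, so one cannot simply "replace each non-triangular component by a consistently oriented cycle carrying the same linking data," as your fallback suggests. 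That replacement step is precisely what needs a construction, not an appeal.

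The paper's proof is engineered around this obstruction rather than around orienting a known intrinsically $3$-linked graph. It expands the vertex $c_1$ of a cyclically oriented $K_{3,3,2}$ into a directed edge $d_1d_2$ (giving a $9$-vertex digraph $D$ in which the forced link has one component through $d_1d_2$), takes three copies glued along that common edge --- which is where $23 = 3\cdot 7 + 2$ comes from, not from tripartite part sizes --- and crucially reverses the orientation of the shared edge in the third copy $\hat{D}$. With these orientation choices, the cycle sums $C_1 + C_1''$ and $C_1' + C_1''$ appearing in the homology identity $[C_1]-[C_1']+[C_1'+C_1'']-[C_1+C_1'']=0$ are themselves consistently oriented directed cycles, so the pigeonhole argument yields a consistently oriented non-split $3$-link. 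Your cyclic-orientation idea for making triangles consistent is fine as far as it goes (it is how the paper handles the $2$-linked case in Theorem \ref{linked_tournament_thm}), but without a triangle-component $3$-linking theorem, or a gluing mechanism like the paper's that keeps cycle sums consistent, the proposal does not close.
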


\begin{proof}

Start with $K_{3,3,2}$, with vertex partitions $A=\{a_1,a_2,a_3\}$, $B=\{b_1,b_2,b_3\}$ and $C=\{c_1,c_2\}$,
where edges are directed from $A$ to $B$, from $B$ to $C$, and from $C$ to $A$. Expand vertex
$c_1$ into edge $d_1d_2$, so that vertices in $B$ are connected only to $d_1$, and directed to $d_1$, and
vertices in $A$ are only connected to $d_2$, and such edges are directed from $d_2$ to $A$. Denote this
directed graph $D$. It follows readily from work of Chan et al \cite{REU} that the graph $D$ is intrinsically
linked, and in every spatial embedding, there is a pair of linked cycles $C_1,C_2$ with $C_1$ containing the
edge $d_1d_2$. We similarly expand vertex $c_1$ in a second copy of $K_{3,3,2}$ with the same edge
orientations and denote the resulting graph $D'$.

Glue $D$ and $D'$ along $d_1d_2$ to get $DD$. Now, let $\hat{D}$ be a directed graph that is defined
similarly to $D$, except the edge $e=d_1d_2$ is oriented the other way, from $d_2$ to $d_1$. Glue $\hat{D}$
to $DD$ along $d_1d_2$, so that the glued edge goes from $d_1$ to $d_2$ in the copies of $D$, but from $d_2$
to $d_1$ in $\hat{D}$. Call the resulting graph $DDD$. Embed $DDD$. In each copy of $D$ and $D'$, there is a
pair of consistently oriented links, call them $C_1$ and $C_2$ (and $C_1'$ and $C_2'$ in $D'$), where $C_1$
and $C_1'$ share the edge $d_1d_2$. Similarly, in $\hat{D}$, there is a pair of linked cycles $C_1''$ and
$C_2''$, where $C_2''$ is consistent, $C_1''$ is not, and $C_1''$ shares edge $d_1d_2$. Consider the set of
cycles $S=\{C_1, C_1', C_1''+C_1', C_1+C_1''\}$, then $[C_1]-[C_1']+[C_1'+C_1'']-[C_1+C_1'']=0$ in
$H_1({\mathbb R}^3-C_2, {\mathbb Z})$ and in $H_1({\mathbb R}^3-C_2', {\mathbb Z})$ and in $H_1({\mathbb
R}^3-C''_2, {\mathbb Z})$ (see Figure \ref{sec}). It follows that $C_2, C_2'$ and $C_2''$ each have non-zero
linking number with at least two cycles in $S$. By the pigeonhole principle, there is a non-split 3-link of
consistently oriented cycles.

The digraph $DDD$ has at most one edge between any pair of vertices, so we may add directed edges (of
arbitrary orientation) to form a tournament $T$ on 23 vertices.  As $T$ has $DDD$ as a subgraph, it is
intrinsically 3-linked as a directed graph.

\begin{figure}
\includegraphics[scale=.18]{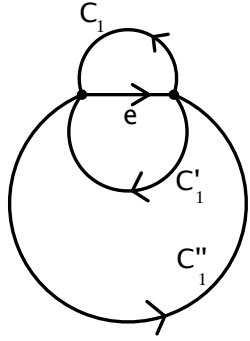}
\caption{The cycle $C_2$ is linked with at least two consistently oriented cycles in this figure.}

\label{sec}
\end{figure}
\end{proof}

\section{Intrinsic 4-Linking in Tournaments}

\begin{prop}
No tournament on 11 vertices is intrinsically 4-linked as a directed graph.
\end{prop}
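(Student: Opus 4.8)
The plan is to reduce the problem to a known result about undirected graphs. The statement to prove is that no tournament on $11$ vertices is intrinsically $4$-linked as a directed graph, i.e.\ that every tournament on $11$ vertices admits an embedding in $S^3$ containing no non-split $4$-component link whose components are all consistently oriented cycles. The natural strategy, mirroring the lower-bound arguments of Proposition~\ref{K_6_nIL_prop} and the $9$-vertex $3$-linking proposition, is to invoke an embedding of $K_{11}$ that is known to contain no $4$-component non-split link at all (ignoring orientations). If such an undirected linkless-in-the-$4$-component-sense embedding of $K_{11}$ exists in the literature, then placing any orientation of $K_{11}$ into that embedding immediately yields a tournament whose embedding contains no non-split $4$-link, consistently oriented or not. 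This is the cleanest route and is exactly parallel to the proof that $K_9$ has an embedding with no $3$-link.

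First I would locate the relevant reference: the paper cites \cite{fnp} (Flapan--Naimi--Pommersheim) for the fact that $K_9$ has an embedding with no $3$-link, and the same circle of results should provide that $K_{11}$ (or enough of it) has an embedding with no non-split $4$-component link. The key quantitative fact I expect to use is a theorem stating that $n = 4m+1$ vertices (so $n=9$ for $m=2$, $n=13$ for $m=3$, etc.) is the threshold for intrinsic $m$-linking, or more precisely that complete graphs below a certain size admit embeddings free of $m$-component non-split links. If the threshold for intrinsic $4$-linking of complete graphs is $K_{13}$ (following the pattern that $K_{4m+1}$ is the first intrinsically $m$-linked complete graph), then $K_{11}$ and $K_{12}$ both admit $4$-linkless embeddings, and the argument goes through verbatim.

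The key steps, in order, would be: (1) cite the appropriate theorem guaranteeing an embedding $f$ of the undirected $K_{11}$ with no non-split $4$-component link; (2) observe that consistency of orientation is an additional constraint, so any consistently oriented non-split $4$-link in an oriented embedding is in particular a non-split $4$-link of the underlying undirected graph; (3) conclude that, given any tournament $T$ on $11$ vertices, the embedding $f$ (which only depends on the underlying $K_{11}$) contains no non-split $4$-link, hence no consistently oriented one, so $T$ is not intrinsically $4$-linked as a digraph. This is a two-to-three-line proof once the embedding is in hand.

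The main obstacle is entirely bibliographic rather than mathematical: the argument is trivial \emph{provided} one has a citable embedding of $K_{11}$ free of non-split $4$-component links. The crux is therefore confirming that the relevant result in \cite{fnp} (or a companion paper) actually establishes that $K_{11}$---and not merely $K_9$---admits such an embedding, i.e.\ that the first intrinsically $4$-linked complete graph has more than $11$ vertices. If the literature only gives the $K_9$/$3$-link statement and the $4$-link threshold is not directly available, the fallback would be to construct such an embedding explicitly or to extend the inductive embedding techniques of \cite{fnp} by one step; but I expect the clean citation to be available, making this the easy direction of the $12 \leq n \leq 66$ bound.
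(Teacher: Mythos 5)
Your overall strategy (exhibit an embedding of the underlying $K_{11}$ with no $4$-component non-split link, then note that a consistently oriented $4$-link would in particular be an undirected $4$-link) is sound, but as written the proposal has a genuine gap: its entire content is deferred to a hoped-for citation, and the citation you hope for does not exist in the form you describe. The pattern you guess is wrong --- the first intrinsically $2$-linked complete graph is $K_6$ and the first intrinsically $3$-linked complete graph is $K_{10}$ (this is the Flapan--Naimi--Pommersheim result the paper cites), so there is no ``$K_{4m+1}$'' threshold pattern to extrapolate from, and in fact the exact threshold for intrinsic $4$-linking of complete graphs is not available in the literature. A proof whose key step is ``confirm that the relevant result actually establishes this'' is not yet a proof, and your fallback (explicitly constructing a $4$-linkless embedding of $K_{11}$, or extending the inductive techniques of \cite{fnp}) proposes real work where none is needed.

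The idea you are missing is that for $11$ vertices no embedding theorem is required at all: a $4$-component link consists of $4$ pairwise disjoint cycles, and every cycle in a simple graph uses at least $3$ vertices, so any graph containing $4$ disjoint cycles has at least $12$ vertices. Hence $K_{11}$ contains no $4$ disjoint cycles as an abstract graph, and therefore \emph{every} embedding of \emph{every} $11$-vertex tournament is free of $4$-component links, consistently oriented or not. This one-line counting argument is the paper's proof, and it is also why the embedding you wanted to cite trivially exists. Note that you modeled your argument on the $9$-vertex, $3$-linking proposition, where the citation to \cite{fnp} genuinely is needed because $9 = 3 \cdot 3$ leaves room for three disjoint triangles; here $11 < 4 \cdot 3$ closes that door combinatorially, exactly as in the paper's analogous propositions for $5$-linking ($14 < 15$) and general $n$-linking ($3n-1 < 3n$).
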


\begin{proof}
A 4 component link must contain 4 disjoint cycles. Any cycle in a tournament must contain at least 3
vertices. Thus any 4-linked tournament must have 12 or more vertices.
\end{proof}

\begin{prop} There exists a tournament on 66 vertices that is intrinsically 4-linked as a digraph.
\label{4-linked_prop}
\end{prop}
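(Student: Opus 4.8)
The plan is to exhibit an intrinsically 4-linked directed graph $G$ on at most $66$ vertices and then complete it to a tournament exactly as in Propositions \ref{23_3-linked_prop} and \ref{knotted_prop}: since $G$ will have at most one directed edge between each pair of vertices, I can add arbitrarily oriented edges to obtain a tournament $T$ on $66$ vertices, and because $G$ is a subgraph of $T$, any consistently oriented non-split $4$-link forced in every embedding of $G$ is also forced in every embedding of $T$. So the entire content lies in the construction and analysis of $G$.

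For the atomic piece I would reuse the oriented linked-pair gadget underlying the $3$-linked construction: start from $K_{3,3,2}$ with its parts cyclically oriented ($A \to B \to C \to A$) and expand one vertex of the size-two part $C$ into a directed edge $e = d_1 d_2$, obtaining the $9$-vertex digraph $D$ that, in every embedding, contains a consistently oriented linked pair $(C_1, C_2)$ with $e \subset C_1$ (cf. \cite{REU}). Following the simplified version of the \cite{flemfoisy} 4-linked digraph, I would take several such gadgets, glued in a tree-like pattern along shared designated edges so that the through-edge cycles $C_1^{(i)}$ of the copies meeting at a given edge satisfy a $\mathbb{Z}$-linear relation in the first homology of the complement of each detector cycle $C_2^{(j)}$. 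This is the exact analogue of the relation $[C_1]-[C_1']+[C_1'+C_1'']-[C_1+C_1'']=0$ used for $3$-linking, but now arranged to supply enough independent detectors.

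The engine is then a rank/pigeonhole argument: each detector cycle $C_2^{(j)}$ has non-zero linking number with a controlled number of the cycles in an auxiliary set $S$ of through-edge cycles, and since the detectors coming from distinct copies of $D$ avoid the glued edges, they are mutually disjoint. Counting the forced non-zero linking numbers against the size of $S$ should force four mutually disjoint, consistently oriented cycles whose union cannot be split, i.e. a non-split consistently oriented $4$-link. The vertex budget of $66$ is essentially what it costs to host four genuinely disjoint detector cycles together with the shared-edge superstructure needed to make the homology relation rigid.

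The main obstacle is twofold, and both parts stem from the directed/consistency requirement. First, the pigeonhole count must be sharp enough to rule out degeneration of the forced link into fewer than four components or into a split sublink; obtaining an honest $4$-component conclusion (rather than merely many pairwise linkings) is where the gluing pattern and the choice of $S$ must be engineered carefully. Second, and more delicate, every one of the four forced cycles must be \emph{consistently oriented}: the orientations on $K_{3,3,2}$ and on the glued edges have to be chosen so that all the through-edge cycles, the detectors, and crucially the homological sums $C_1^{(i)} + C_1^{(k)}$ remain consistently oriented after edges cancel along the shared edge. Verifying that this orientation bookkeeping survives every gluing is the crux, and it is precisely this constraint that prevents the construction from collapsing to the much smaller orders available for undirected intrinsic $4$-linking.
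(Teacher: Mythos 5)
Your outer shell is fine (exhibit an intrinsically $4$-linked digraph $G$ on $66$ vertices, then add arbitrarily oriented edges to complete it to a tournament), but the body of your proposal is a strategy sketch rather than a proof, and the one structural choice you commit to is not one you verify and is not what is needed. The paper does not glue the $9$-vertex gadgets $D$ ``in a tree-like pattern along shared edges.'' It works hierarchically: it takes $H$ to be the entire $23$-vertex intrinsically $3$-linked digraph of Proposition \ref{23_3-linked_prop} and forms $G$ from three copies $H_1, H_2, H_3$ by identifying single vertices cyclically ($d_1$ of $H_1$ with $d_2$ of $H_2$, $d_1$ of $H_2$ with $d_2$ of $H_3$, $d_1$ of $H_3$ with $d_2$ of $H_1$); this is exactly where $66 = 3\cdot 23 - 3$ comes from, a count your sketch never produces. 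The mechanism you are missing is path-splicing: in each $H_i$ the forced $3$-link $(L_{i1},L_{i2},L_{i3})$ has $L_{i1}$ passing through $d_1$ and $d_2$, so $L_{i1}$ decomposes into consistently oriented paths $P_{i1}$ (from $d_1$ to $d_2$) and $P_{i2}$ (from $d_2$ to $d_1$); because the copies are identified in a cycle, these splice into two long consistently oriented cycles $Z = P_{11}\cup P_{21}\cup P_{31}$ and $W = P_{12}\cup P_{22}\cup P_{32}$, giving $[L_{11}]+[L_{21}]+[L_{31}]-[Z]-[W]=0$ in $H_1({\mathbb R}^3 - L_{ij},{\mathbb Z})$ for every detector $L_{ij}$, $j\in\{2,3\}$. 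Note that the orientation bookkeeping you identify as ``the crux'' simply does not arise here: since copies are glued at vertices rather than along edges, no edge cancellation occurs, and $Z$, $W$ are consistently oriented because each $P_{i1}$ (resp.\ $P_{i2}$) is traversed in its given direction.

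The second gap is the counting, which you defer with ``should force four mutually disjoint cycles'' but which is where the conclusion actually lives. From the relation above, each of the six detectors $L_{12}, L_{13}, L_{22}, L_{23}, L_{32}, L_{33}$ has non-zero linking number with at least two of the five cycles in $S' = \{L_{11}, L_{21}, L_{31}, Z, W\}$: that is at least $12$ incidences over $5$ cycles, so some $X \in S'$ has non-zero linking number with at least three detectors. Since the cycles in $S'$ pairwise intersect (they share identified vertices or whole paths), any $4$-link must use exactly one member of $S'$; since the detectors avoid $d_1, d_2$ and hence are pairwise disjoint and disjoint from $X$, the cycle $X$ together with three detectors linking it is a non-split, consistently oriented $4$-link. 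Your version of this count cannot even be started, because you never fix the number of gadgets, the gluing tree, or the auxiliary set $S$; in particular nothing in your sketch guarantees that the detectors outnumber $|S|$ (the inequality $12 > 2\cdot 5$ above), which is precisely what the pigeonhole needs to yield three disjoint detectors on a common cycle rather than merely many pairwise linkings.
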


\begin{proof}
Let $H$ be the intrinsically $3$-linked graph constructed in Proposition \ref{23_3-linked_prop}.  In any
embedding of $H$, there is a 3 component link $(L_1, L_2, L_3)$ where $L_1$ is an element of $S=\{C_1, C_1',
C_1''+C_1', C_1+C_1''\}$, and $L_2$ and $L_3$ have non-zero linking number with $L_1$.  Further, $L_1$ must
contain vertices $d_1$ and $d_2$.  As $L_1$ is consistently oriented, we may consider $L_1$ to be composed of
two paths, $P_1$ from $d_1$ to $d_2$, and $P_2$ from $d_2$ to $d_1$.

Let $G$ be a directed graph formed from 3 copies of $H$.  Label them $H_1, H_2,$ and $H_3$.  To form $G$,
identify $d_1$ from $H_1$ to $d_2$ in $H_2$, $d_1$ in $H_2$ to $d_2$ in $H_3$ and $d_1$ in $H_3$ to $d_2$ in
$H_1$.

In any embedding of $G$, we may find a $3$-component link $(L_{i1}, L_{i2}, L_{i3})$ in each $H_i$.  Using
the decomposition of $L_{i1}$ into paths, we may form the consistently oriented cycles $Z = P_{11} \cup
P_{21} \cup P_{31}$ and $W = P_{12} \cup P_{22} \cup P_{32}$.  Let $S' = \{L_{11}, L_{21}, L_{31}, Z,
W\}$ and note that $[L_{11}] + [L_{21}] + [L_{31}] - [Z] - [W] = 0$ in $H_1({\mathbb R}^3-L_{i2}, {\mathbb
Z})$ and in $H_1({\mathbb R}^3-L_{i3}, {\mathbb Z})$.  It follows that $L_{12}, L_{13}, L_{22}, L_{23},
L_{32}$ and $L_{33}$ each have non-zero linking number with at least 2 cycles in $S'$.  By the pigeonhole
principle, there is a non-split 4-link of consistently oriented cycles.

The digraph $G$ has at most one edge between any pair of vertices, so we may add directed edges (of arbitrary
orientation) to form a tournament $T$ on 66 vertices.  As $T$ has $G$ as a subgraph, it is intrinsically
4-linked as a directed graph.

\end{proof}

\section{Intrinsic 5-Linking in Tournaments}

\begin{prop}
No tournament on 14 vertices is intrinsically 5-linked as a directed graph.
\end{prop}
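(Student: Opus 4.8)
The plan is to reuse the elementary counting argument employed in the $4$-linked case. A non-split $5$-component link consists of five pairwise disjoint cycles, so a tournament that is intrinsically $5$-linked as a digraph must contain, in every embedding, five vertex-disjoint consistently oriented cycles. The key observation I would invoke is that a tournament has exactly one directed edge between each pair of vertices and hence contains no directed $2$-cycle; consequently every consistently oriented cycle must use at least three distinct vertices. Five pairwise disjoint such cycles therefore occupy at least $5 \cdot 3 = 15$ vertices.

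Concretely, I would first record the no-digon observation and conclude that each of the five link components needs three or more vertices. Then, since the components of a link are vertex-disjoint, their vertex sets are disjoint, forcing the ambient tournament to have at least $15$ vertices. A tournament on $14$ vertices simply cannot house five disjoint cycles, so \emph{no} embedding of it can contain a consistently oriented non-split $5$-component link; in particular it is not intrinsically $5$-linked as a directed graph.

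I do not expect a genuine obstacle here: the argument is purely combinatorial and parallels the proof that an intrinsically $4$-linked tournament requires at least $12$ vertices. The only point that deserves explicit justification is that cycles in a tournament have length at least three, and this follows immediately from the defining property that a tournament has a single directed edge between each pair of vertices (ruling out a two-vertex directed cycle). This simultaneously establishes that no tournament on $14$ vertices is $5$-linked at all, which is the stated claim.
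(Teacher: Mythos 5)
Your argument is correct and is essentially identical to the paper's proof: both count that five disjoint cycles, each of length at least three (since a tournament has no directed $2$-cycles), require at least $15$ vertices. The only difference is that you spell out the no-digon justification, which the paper leaves implicit.
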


\begin{proof}
A 5 component link must contain 5 disjoint cycles. Any cycle in a tournament must contain at least 3
vertices. Thus any 5-linked tournament must have 15 or more vertices.
\end{proof}

\begin{prop} There exists a tournament on 154 vertices that is intrinsically 5-linked as a digraph.
\label{5-linked_prop}
\end{prop}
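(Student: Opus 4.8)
The plan is to mimic the construction of Proposition \ref{4-linked_prop}, gluing together seven copies of the $3$-linked graph of Proposition \ref{23_3-linked_prop} instead of three. Write $H$ for that graph, and recall that in every embedding of $H$ the guaranteed link $(L_1,L_2,L_3)$ is a \emph{star}: the distinguished component $L_1$ belongs to $S$, passes through both $d_1$ and $d_2$, and each of $L_2,L_3$ has non-zero linking number with $L_1$. Being consistently oriented, $L_1$ splits as $L_1=P_1\cup P_2$ with $P_1$ running from $d_1$ to $d_2$ and $P_2$ from $d_2$ to $d_1$; and since $L_2,L_3$ are disjoint from $L_1$, they avoid $d_1$ and $d_2$.

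First I would build a digraph $G$ from copies $H_1,\dots,H_7$ by identifying $d_1$ of $H_i$ with $d_2$ of $H_{i+1}$ (indices mod $7$). This creates seven gluing vertices and leaves at most one edge between any pair, so $G$ has $7\cdot 23-7=154$ vertices. Fix an embedding of $G$. Each $H_i$ yields a star $3$-link with centre $L_{i1}=P_{i1}\cup P_{i2}$ and leaves $L_{i2},L_{i3}$ lying in the interior of $H_i$. Chaining the paths around the cycle of copies produces two consistently oriented cycles $Z=\bigcup_i P_{i1}$ and $W=\bigcup_i P_{i2}$ through all seven gluing vertices, and the chain identity $L_{i1}=P_{i1}+P_{i2}$ yields
\[
\sum_{i=1}^{7}[L_{i1}]\;-\;[Z]\;-\;[W]\;=\;0
\]
in $H_1(\mathbb{R}^3-L_{ij},\mathbb{Z})$ for every leaf $L_{ij}$.

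Next I would run the linking/pigeonhole argument on the nine \emph{core} cycles $\{L_{11},\dots,L_{71},Z,W\}$ and the fourteen \emph{satellite} leaves $L_{ij}$ ($j=2,3$), noting that the satellites are pairwise disjoint and disjoint from every core cycle. Each satellite links its own centre $L_{i1}$, so the displayed relation forces it to link at least two core cycles; this gives at least $28$ incidences over nine core cycles, whence some core cycle $A^\ast$ links at least $\lceil 28/9\rceil=4$ satellites. Then $A^\ast$ together with four satellites it links forms five pairwise disjoint, consistently oriented cycles with connected (star) linking graph, hence a non-split $5$-link. Finally, as $G$ carries at most one edge between any pair of vertices, I complete it to a tournament $T$ on $154$ vertices by adding arbitrarily oriented edges; since $G\subseteq T$, the tournament $T$ is intrinsically $5$-linked as a digraph.

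The main obstacle is the counting that pins down seven as the correct number of copies: the pigeonhole produces a core cycle of satellite-degree four only when $4k/(k+2)>3$, i.e.\ $k\ge 7$, and this threshold for upgrading a $4$-leaf star to a $5$-leaf star is exactly what fixes the order at $154$. The remaining care lies in checking that the five chosen cycles are genuinely disjoint; this is where it matters that each satellite avoids the gluing vertices and that $A^\ast$—whether it is some $L_{i1}$ or one of $Z,W$—meets any satellite's copy only along a sub-path of that copy's centre.
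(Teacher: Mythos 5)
Your proposal is correct and follows essentially the same route as the paper: seven copies of the $23$-vertex $3$-linked graph glued cyclically along the $d_1,d_2$ vertices, the homology relation $\sum_{i=1}^{7}[L_{i1}]-[Z]-[W]=0$ forcing each of the $14$ leaves to link at least two of the $9$ core cycles, and a pigeonhole to extract a star-shaped non-split $5$-link before completing to a tournament on $7\cdot 23-7=154$ vertices. Your write-up actually makes explicit two points the paper leaves implicit---the incidence count $\lceil 28/9\rceil=4$ (and the threshold $4k/(k+2)>3$ explaining why seven copies are needed) and the pairwise-disjointness check for the five chosen cycles---so it is, if anything, a more complete rendering of the same argument.
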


\begin{proof}
We proceed as in the 4-linking case.

Let $H$ be the intrinsically $3$-linked graph constructed in Proposition \ref{23_3-linked_prop}.  In any
embedding of $H$, there is a 3 component link $(L_1, L_2, L_3)$ where $L_1$ is an element of $S=\{C_1, C_1',
C_1''+C_1', C_1+C_1''\}$, and $L_2$ and $L_3$ have non-zero linking number with $L_1$.  Further, $L_1$ must
contain vertices $d_1$ and $d_2$.  As $L_1$ is consistently oriented, we may consider $L_1$ to be composed of
two paths, $P_1$ from $d_1$ to $d_2$, and $P_2$ from $d_2$ to $d_1$.

Let $G$ be a directed graph formed from 7 copies of $H$.  Label them $H_1, H_2, \ldots H_7$.  To form $G$,
identify $d_1$ from $H_1$ to $d_2$ in $H_2$, $d_1$ in $H_2$ to $d_2$ in $H_3$ and so on to $d_1$ in $H_7$ to
$d_2$ in $H_1$.

In any embedding of $G$, we may find a $3$-component link $(L_{i1}, L_{i2}, L_{i3})$ in each $H_i$.  Using
the decomposition of $L_{i1}$ into paths, we may form the consistently oriented cycles $Z = P_{11} \cup
P_{21} \cup \ldots \cup P_{71}$ and $W = P_{12} \cup P_{22} \cup \ldots \cup P_{72}$.  Let $S' = \{L_{11},
L_{21}, \ldots L_{71}, Z, W\}$ and note that $[L_{11}] + [L_{21}] + \ldots + [L_{71}] - [Z] - [W] =
0$ in $H_1({\mathbb R}^3-L_{i2}, {\mathbb Z})$ and in $H_1({\mathbb R}^3-L_{i3}, {\mathbb Z})$.  It follows
that $L_{12}, L_{13}, L_{22}, L_{23}, \ldots L_{72}$ and $L_{73}$ each have non-zero linking number with at
least 2 cycles in $S'$.  By the pigeonhole principle, there is a non-split 5-link of consistently oriented
cycles.

The digraph $G$ has at most one edge between any pair of vertices, so we may add directed edges (of arbitrary orientation) to form a tournament $T$ on 154 vertices.  As $T$ has $G$ as a subgraph, it is intrinsically
5-linked as a directed graph.

\end{proof}

\section{Intrinsic N-linking in Tournaments}
\label{n_link_section}

For the general case, we will rely on the construction from Lemma 1 of Flapan, Mellor and Naimi
\cite{flapan}.  We first note the obvious lower bound.

\begin{prop}
No tournament on $3n-1$ vertices is intrinsically $n$-linked as a directed graph.
\end{prop}

\begin{proof}
An $n$-component link must contain $n$ disjoint cycles. Any cycle in a tournament must contain at least 3
vertices. Thus any $n$-linked tournament must have $3n$ or more vertices.
\end{proof}

The following lemma establishes the building block we need for the full construction.

\begin{lemma}
There exists a tournament $T'$ on 8 vertices such that every embedding of $T'$ contains a link $(L_1, L_2)$
such that the linking number $lk(L_1,L_2)$ is odd, $L_2$ is consistently oriented, and $L_1$ contains two
vertices $a$ and $b$ such that it may be decomposed into two paths $P_1$ and $P_2$ such that each $P_i$ is
consistently oriented from vertex $a$ to vertex $b$. \label{t_prime_lemma}
\end{lemma}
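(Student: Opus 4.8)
The plan is to reuse the $K_{3,3,2}$ construction behind Theorem \ref{linked_tournament_thm}, but to re-orient a single family of edges so that one of the two guaranteed linked triangles becomes a \emph{transitive} (single source, single sink) triangle rather than a consistently oriented cycle. Concretely, I would take $K_8$ on the vertex set $\{a_1,a_2,a_3,b_1,b_2,b_3,x,y\}$ and single out the $K_{3,3,2}$ subgraph $H$ with parts $A=\{a_1,a_2,a_3\}$, $B=\{b_1,b_2,b_3\}$, $C=\{x,y\}$. I orient the edges of $H$ by $x\to a_i$ and $y\to a_i$, by $a_i\to b_j$, by $b_j\to x$, and by $y\to b_j$, for all $i,j$; the seven remaining intra-part edges (the three inside $A$, the three inside $B$, and $xy$) are oriented arbitrarily to complete $T'$ into a tournament on the required $8$ vertices. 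This differs from the orientation in Theorem \ref{linked_tournament_thm} only in that the edges between $y$ and $B$ are reversed.

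First I would invoke the result of Chan et al \cite{REU} as used in Theorem \ref{linked_tournament_thm}: every embedding of $H$ contains a pair of disjoint triangles with odd linking number. Since $H$ is tripartite with $|C|=2$, two disjoint triangles must use two vertices from each part, and in particular must split $x$ and $y$; hence the pair has the form $xa_ib_j$ and $ya_kb_l$ with $i\neq k$ and $j\neq l$. This structural observation -- that disjointness forces one triangle through $x$ and the other through $y$ -- is the linchpin, since it lets me assign the two roles independently of which particular pair the topological result produces.

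Next I would check the two orientations. The triangle $L_2=xa_ib_j$ has edges $x\to a_i\to b_j\to x$, so it is a consistently oriented cycle, as required. The triangle $L_1=ya_kb_l$ has edges $y\to a_k$, $a_k\to b_l$, and $y\to b_l$, so $y$ is a source and $b_l$ is a sink; taking $a=y$ and $b=b_l$, it decomposes as $P_1=y\to b_l$ and $P_2=y\to a_k\to b_l$, each consistently oriented from $a$ to $b$. Because the guaranteed pair $(L_1,L_2)$ lies entirely in the $K_{3,3,2}$ subgraph, the arbitrarily oriented intra-part edges are irrelevant, and $lk(L_1,L_2)$ is odd by the cited result. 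This furnishes exactly the link demanded by the lemma.

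The step I expect to require the most care is the passage from the "non-zero linking number" quoted in Theorem \ref{linked_tournament_thm} to the "odd linking number" asserted here. I would handle this by appealing to the mod-$2$ form of the Conway--Gordon--Sachs argument underlying \cite{REU}: the sum over all pairs of disjoint triangles of the mod-$2$ linking numbers equals $1$, so some pair has odd linking number, and it is the parity -- not merely non-vanishing -- that must propagate through the $\mathbb{Z}/2$ homology bookkeeping of the $n$-linking construction that follows. Everything else is a routine orientation check.
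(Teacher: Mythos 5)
Your proposal is correct and takes essentially the same approach as the paper: orient $K_{3,3,2}$ so that every triangle through one vertex of the size-two part is a consistently oriented cycle while every triangle through the other is transitive (single source, single sink), invoke the Chan et al.\ result that some pair of disjoint triangles has odd linking number (disjointness forcing the pair to split the two-vertex part), and complete to a tournament with arbitrarily oriented edges. The only differences are cosmetic -- your transitive triangle has its source at the $C$-part vertex, whereas the paper's places $c_1$ as the interior vertex of the length-two path -- and your care about ``odd'' versus ``non-zero'' linking number matches the paper, which cites the mod-2 form of the result directly.
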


\begin{proof}

Start with $K_{3,3,2}$ with vertex partitions $A=\{a_1,a_2,a_3\}$, $B=\{b_1,b_2,b_3\}$ and $C=\{c_1,c_2\}$,
where all edges are directed from $A$ to $B$, from $B$ to $c_2$, from $c_2$ to $A$, from $A$ to $c_1$ and
$c_1$ to $B$. Give all remaining edges an arbitrary orientation, and label this tournament $T'$.

As every embedding of $K_{3,3,2}$ contains a pair of 3-cycles with odd linking number, so does every
embedding of $T'$.  Each 3-cycle must contain exactly one of the $c_i$.  Label the 3-cycle that contains
$c_i$ as $L_i$.   Then $lk(L_1,L_2)$ is odd, and $L_2$ is consistently oriented.  The cycle $L_1$ contains
the vertices $c_1, a_j, b_k$, so it may be decomposed into the path $P_1 = a_j b_k$  and the path $P_2 = a_j
c_1 b_k$, each of which are consistently oriented from $a_j$ to $b_k$.

\end{proof}

We now construct the $n$-linked tournament.

\begin{prop}
There exists a tournament $T$ on $8(2n-3)^2$ vertices such that $T$ is intrinsically $n$-linked as a
directed graph. \label{nlinking_prop}
\end{prop}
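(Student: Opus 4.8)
The plan is to build the directed analogue of the grid graph used by Flapan, Mellor and Naimi in Lemma 1 of \cite{flapan}, using the tournament $T'$ of Lemma \ref{t_prime_lemma} as the basic building block in place of the undirected block used there. The single feature of $T'$ that drives the whole argument is the one isolated in Lemma \ref{t_prime_lemma}: in every embedding $T'$ yields a link $(L_1, L_2)$ with $lk(L_1, L_2)$ odd, $L_2$ consistently oriented, and $L_1$ decomposing into two paths $P_1, P_2$ each consistently oriented from $a$ to $b$. The path decomposition is what allows copies of $L_1$ to be concatenated, and the consistency of these paths is what will keep the concatenated cycles consistently oriented.

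First I would take $(2n-3)^2$ disjoint copies of $T'$ and arrange them in a $(2n-3)\times(2n-3)$ grid, labelling the block in position $(i,j)$ as $T'_{ij}$ with distinguished vertices $a_{ij}, b_{ij}$ and paths $P^1_{ij}, P^2_{ij}$. Following the FMN layout, I would join the blocks by connecting the terminal vertex $b$ of each block to the initial vertex $a$ of its grid-neighbours, always orienting the connecting edge in the $b \to a$ direction so that it continues a path running consistently through the grid. Concatenating the building-block paths along rows and columns then produces a family of long cycles, each consistently oriented precisely because every constituent path $P^k_{ij}$ runs consistently from $a_{ij}$ to $b_{ij}$ and the connectors respect this direction. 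Each $T'_{ij}$ continues to contribute its consistently oriented component $L^{ij}_2$, disjoint from all the others and having odd linking number with the local $L_1$ of its block.

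Next I would invoke the combinatorial heart of \cite{flapan}. The long grid cycles satisfy homological relations in the first homology of the complement of each fixed component $L^{ij}_2$, with a suitable signed sum of them vanishing there, exactly as in the relations used in the $n=3,4,5$ cases above; meanwhile each $L^{ij}_2$ has odd linking number with the local $L_1$ of its own block. Exploiting the two-dimensional grid rather than a single cyclic chain, a pigeonhole count over the $(2n-3)^2$ blocks then forces some consistently oriented cycle to have nonzero linking number with at least $n-1$ of the mutually disjoint components $L^{ij}_2$. That cycle together with those $n-1$ components forms a non-split $n$-component link of consistently oriented cycles. The side length $2n-3$ is chosen so that the pigeonhole count returns exactly $n-1$, which is where the quadratic vertex bound enters. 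Finally I would assign arbitrary orientations to all remaining pairs among the $8(2n-3)^2$ vertices to complete $T$ to a tournament; it still contains the directed grid graph and hence is intrinsically $n$-linked.

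The main obstacle is orientation consistency, the one genuinely new requirement relative to the undirected construction of \cite{flapan}. There, every cycle appearing in the grid may be used freely; here every one of the $n$ components of the final link must be consistently oriented, and in particular every concatenated long cycle that participates in the homological relations must close up consistently. The path-decomposition property of $T'$ is engineered to supply this, but I must verify that the blocks can be laid out in the grid with a globally compatible $a$-to-$b$ orientation, so that chaining paths around rows and columns never forces two incompatible directions to meet. Once this compatibility is checked, the linking and pigeonhole steps are formally identical to the argument of \cite{flapan} and require no new topology.
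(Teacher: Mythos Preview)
Your construction diverges from the paper's in a way that matters. The paper does \emph{not} lay the $(2n-3)^2$ copies of $T'$ out in a two-dimensional grid; it strings them in a single cyclic chain, adding edges from every $b_{ik}$ to every $a_{(i+1)j}$ (and from the last block back to the first). There is then one large consistently oriented cycle $C=\bigcup_i P_{i1}\cup e_i$, and the whole game is to decide, for each block, whether to flip $P_{i1}$ to $P_{i2}$ inside $C$ so that the resulting cycle $Z$ links at least $n-1$ of the $L_{j2}$. Your grid picture, with row and column cycles sharing the same $a_{ij},b_{ij}$ in each block, does not obviously support disjoint long cycles (the row cycle and the column cycle through block $(i,j)$ would both have to use a path from $a_{ij}$ to $b_{ij}$), and you never say which family of cycles you pigeonhole over or why the count yields $n-1$.

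More seriously, the step you describe as ``a pigeonhole count over the $(2n-3)^2$ blocks'' is actually a linear-algebra argument over $\mathbb{F}_2$, and it is the real content of Lemma~1 of \cite{flapan}. One forms the $(2n-3)^2\times(2n-3)^2$ matrix $M$ with $m_{ij}=lk(L_{i1},L_{j2})\bmod 2$; the diagonal is all ones by Lemma~\ref{t_prime_lemma}. If $\operatorname{rank}M\ge 2n-3$, the mod-$2$ sum of the rows of the reduced row-echelon form has at least $2n-3$ nonzero entries (one per pivot); if $\operatorname{rank}M<2n-3$, pigeonhole on the $(2n-3)^2$ nonzero columns forces some single row of the echelon form to have at least $2n-3$ nonzero entries. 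Either way one gets a vector $V=\sum_{i\in I}r_i$ with at least $2n-3$ odd entries, and the cycle $Z$ obtained by flipping exactly the paths with $i\in I$ satisfies $lk(Z,L_{j2})\equiv lk(C,L_{j2})+V_j\pmod 2$. Since we may assume $C$ links fewer than $n-1$ of the $L_{j2}$, at least $(2n-3)-(n-2)=n-1$ of them link $Z$. This rank dichotomy is exactly why the number of blocks is $(2n-3)^2$ rather than, say, $2n-3$; your sketch does not supply any mechanism that plays this role. Your instinct about orientation consistency is correct and is handled just as you anticipated---every such $Z$ is consistently oriented because both $P_{i1}$ and $P_{i2}$ run from $a$ to $b$---but the combinatorial core is the matrix argument, not a grid pigeonhole, and that is what is missing from your proposal.
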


\begin{proof}

We begin with $(2n-3)^2$ copies of the tournament $T'$ from Lemma \ref{t_prime_lemma} and label them $T'_i$.
We add edges oriented from $b_{ik}$ to $a_{(i+1)j}$ and edges oriented from $b_{(2n-3)^2k}$ to $a_{1j}$.  We
then add edges of arbitrary orientation to complete the construction of the tournament $T$.

Embed $T$. We may now follow the proof of Lemma 1 from \cite{flapan}.   In each copy of $T'_i$, we may find a
2-link $L_{i1}, L_{i2}$ and edges $e_i$ from $L_{i1}$ to $L_{(i+1)1}$.  Let $C = \bigcup P_{i1} \cup e_i$.
Notice that $C$ is consistently oriented. If $lk( C, L_{i2}) \neq 0$ for $n-1$ cycles $L_{i2}$, we are done.
If not, we will construct an index set $I$ and form a new cycle $Z = \bigcup P_{i\epsilon} \cup e_i$ where
$\epsilon = 2$ if $i \in I$ and $\epsilon = 1$ otherwise.  Note that for any choice of $I$, the cycle $Z$ is
consistently oriented.

Let $M$ be a $(2n-3)^2$ by $(2n-3)^2$ matrix where the entry $m_{ij} = lk(L_{i1}, L_{j2})$ modulo 2.  By the
construction of $T'$, $m_{ii} = 1$.  Let $M'$ be the reduced row echelon form of $M$ modulo 2.  Let $r$ be
the rank of $M$.   If $r \geq (2n-3)$ then let $V$ be the modulo 2 sum of the rows of $M'$.   If $r <
(2n-3)$, then as each column of $M$ contains a 1, some row of $M'$ contains at least $2n-3$ non-zero entries.
Let $V$ be this row.

In either case, $V$ can be written as the modulo 2 sum of rows of $M$, so $V = \sum_{i \in I} r_i$, where
$r_i$ are the rows of $M$.  Let $Z = \bigcup P_{i\epsilon} \cup e_i$ where $\epsilon = 2$ if $i \in I$ and
$\epsilon = 1$ otherwise.  Let $V_j$ be the $j$th entry of $V$. Notice that that $V_j \equiv \sum_{i \in I}
lk(L_{i1}, L_{j2})$ modulo 2 and that $lk(Z, L_{j2}) \equiv lk(C, L_{j2}) + \sum_{i \in I} lk(L_{i1},
L_{j2})$ modulo 2.

Thus, $lk(Z, L_{j2}) \equiv lk(C, L_{j2}) + V_j$ modulo 2.  At least $2n-3$ of the $V_j$ are odd, and there exist
at most $n-2$ components $L_{j2}$ that have non-zero linking number with $C$.  Thus, $lk(Z, L_{j2}) \equiv 1$
modulo 2 for at least $n-1$ components $L_{j2}$, giving a non-split link with $n$ consistently oriented
components.

\end{proof}

Flapan, Mellor and Niami introduced the idea of \emph{linking patterns} in the study of intrinsically linked
graphs in \cite{flapan}.  The linking pattern of a link $L_1, \ldots L_n$ is the graph $\Gamma$ with vertices
$v_1 \ldots v_n$ and an edge between $v_i$ and $v_j$ if $lk(L_i,L_j) \neq 0$.  They then show that for any
linking pattern $\Gamma$ there exists a graph $G$ such that every embedding of $G$ contains a link whose
linking pattern contains $\Gamma$.  The first step to obtain this general result is to show that for any $n$, there
exists a graph $G$ such that every embedding of $G$ contains a link whose linking pattern contains $K_{n,n}$.
This result requires only the iterative application of Lemma 1 of \cite{flapan}.  As Proposition
\ref{nlinking_prop} is the direct analogue of Lemma 1 of \cite{flapan} for tournaments, we have the analogous
result as a corollary.

\begin{cor}
For all $n$, there exists a tournament $T$ such that every embedding of $T$ contains a consistently oriented
link whose mod 2 linking pattern contains $K_{n,n}$.
\end{cor}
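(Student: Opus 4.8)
The plan is to transport the Flapan--Mellor--Naimi derivation of the $K_{n,n}$ linking pattern to the directed setting, using Proposition \ref{nlinking_prop} in the role played by Lemma 1 of \cite{flapan}, and then to pad the resulting directed graph out to a tournament. Since \cite{flapan} obtains the $K_{n,n}$ result purely by iterating their Lemma 1, and Proposition \ref{nlinking_prop} is the exact tournament analogue of that lemma, the combinatorial skeleton of the argument carries over unchanged; the only genuinely new point is to verify that orientation-consistency survives the iteration. Accordingly, I would first isolate the \emph{iterable building-block property} extracted from Lemma \ref{t_prime_lemma}: a directed graph with two distinguished vertices $a, b$ such that every embedding contains a 2-link $(L_1, L_2)$ with $lk(L_1, L_2)$ odd, $L_2$ consistently oriented, and $L_1$ equal to the union of two consistently oriented paths, each running from $a$ to $b$.

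Next I would record that this property is preserved under the chaining construction of Proposition \ref{nlinking_prop}. Chaining $k$ copies of a building block around a cycle and forming the cycles $C$ and $Z$ produces, in every embedding, a single consistently oriented cycle that is mod-2 linked with at least a prescribed number of disjoint consistently oriented cycles $L_{i2}$ --- that is, a star pattern with a consistent center. The two-path hypothesis is precisely what makes each mod-2 combination $Z = \bigcup P_{i\epsilon} \cup e_i$ a genuinely consistently oriented cycle, since swapping $P_{i1}$ for $P_{i2}$ never breaks consistency. Moreover, by cutting the chain open at one gluing vertex, the two families $\bigcup P_{i1} \cup e_i$ and $\bigcup P_{i2} \cup e_i$ furnish two consistently oriented paths between the endpoints of the chain, so the output again satisfies the building-block property and can be fed into the next level of the iteration. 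Running the \cite{flapan} iteration with these directed building blocks then forces, in every embedding, $2n$ pairwise disjoint and individually consistently oriented cycles $A_1, \dots, A_n, B_1, \dots, B_n$ with $lk(A_i, B_j)$ odd for all $i, j$, whose mod-2 linking pattern therefore contains $K_{n,n}$.

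Finally, every graph appearing in the construction is formed by identifying vertices and inserting single directed edges between copies of simple directed graphs, so it has at most one edge between any pair of vertices and may be completed to a tournament $T$ by orienting the missing edges arbitrarily; any embedding of $T$ restricts to an embedding of the subgraph, giving the desired link. I expect the main obstacle to be exactly the orientation bookkeeping flagged above: in the undirected argument of \cite{flapan} all cycles are interchangeable and mod-2 symmetric differences are automatically realized by cycles, whereas here each linear combination used in the pigeonhole and rank argument must be realized by a \emph{consistently oriented} cycle, and the building-block hypothesis must be re-established at every level of the recursion. Once this invariant is maintained, the linear-algebra and pigeonhole core of the argument is identical to that of Proposition \ref{nlinking_prop} and of \cite{flapan}.
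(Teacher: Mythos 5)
Your overall strategy is the same as the paper's: the paper obtains this corollary by observing that Proposition \ref{nlinking_prop} plays the role of Lemma 1 of \cite{flapan} and then invoking the iterative application of that lemma, which is exactly what you propose. The problem lies at the single point where you (correctly) locate the difficulty --- re-establishing the building-block property at the next level of the recursion --- because the mechanism you offer for it fails. The two families $\bigcup_i P_{i1} \cup e_i$ and $\bigcup_i P_{i2} \cup e_i$ are not internally disjoint paths: by Lemma \ref{t_prime_lemma} the paths $P_{i1}$ and $P_{i2}$ have the \emph{same} endpoints $a_i, b_i$ inside each block, so the two families share every connector edge $e_i$ and every gluing vertex. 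Their union is therefore not a cycle, and the chained graph has not been shown to satisfy the property of Lemma \ref{t_prime_lemma} (a cycle that decomposes into two internally disjoint, consistently oriented $a$-to-$b$ paths and has odd linking number with a disjoint consistently oriented cycle). Even if one relaxes the building-block property to what the mod-$2$ swapping argument actually needs --- two swappable consistently oriented $a$-to-$b$ paths together with a consistent spoke disjoint from both --- the other half of the hypothesis still fails: the diagonal entries of the next-level matrix must be odd, i.e.\ the difference class of the two paths must link the chosen spoke oddly. For your two families the difference class is $\sum_i [L_{i1}]$, so its linking number with a spoke $L_{j2}$ is the column sum $\sum_i lk(L_{i1}, L_{j2})$ of the level-one matrix, a quantity the construction does not control (only the diagonal entries $lk(L_{j1}, L_{j2})$ are forced to be odd).

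There is also a structural reason why some genuinely new idea is required here, rather than bookkeeping: a consistently oriented cycle can never be decomposed into two internally disjoint paths both directed from $a$ to $b$ (that would force out-degree two at $a$ within the cycle), so the consistent hub $Z$ produced by Proposition \ref{nlinking_prop} can never itself serve as the first component of a next-level building block, while the inconsistent cycles $L_{i1}$ that can serve in that role can never appear as components of the final consistently oriented link. Relatedly, your concluding claim that the iteration yields $n$ \emph{pairwise disjoint} cycles $A_1, \dots, A_n$ each linking common spokes is never justified: any two hub candidates arising from one chain share all of its connector edges, and hubs at different levels of a nested iteration share the gluing vertices (indeed whole arcs) through which both are forced to pass, so neither source can supply disjoint $A_i$'s. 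Manufacturing $n$ mutually disjoint consistent hubs linking $n$ common consistent spokes is precisely the content of transporting the iteration of \cite{flapan} to tournaments, and it is the part your proposal leaves unproven; to that extent the proposal, like the paper's own one-line derivation, records the right plan but not a proof.
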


It is likely that the techniques of Mattman, Naimi and Pagano \cite{mnp} can be extended to find examples of
tournaments with arbitrary linking patterns.

\section{Intrinsic Linking with Knotted Components}

If every embedding of a graph $G$ contains a non-split $n$-component link where at least $m$ components of
the link are non-trivial knots, we will say that $G$ is \emph{intrinsically n-linked with m-knotted
components.}

If every embedding of a directed graph contains a consistently oriented non-split $n$-component link where at
least $m$ components of the link are non-trivial knots, we will say that $G$ is \emph{intrinsically n-linked
with m-knotted components as a directed graph.}

In \cite{flem}, the first author demonstrated graphs that are intrinsically $n$-linked with $m$-knotted
components for $m<n$. Flapan, Naimi and Mellor used different techniques to construct examples for all $m$,
including $m=n$ in \cite{flapan}.

We will use the techniques of \cite{flem} to construct a tournament that is intrinsically 2-linked with
1-knotted component as a directed graph.

\begin{prop}
No tournament on 8 vertices is intrinsically 2-linked with 1-knotted component as a directed graph.
\end{prop}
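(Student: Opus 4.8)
The plan is to exhibit, for an arbitrary orientation of $K_8$, a single embedding in which no two-component link has a knotted component at all. Once such an embedding is found, the orientation hypothesis becomes irrelevant: if no $2$-component link has a knotted component, then in particular no \emph{consistently oriented} one does, so the tournament fails to be intrinsically $2$-linked with $1$-knotted component as a directed graph. Because the embedding I use will not depend on the chosen orientation, the same argument applies uniformly to every tournament on $8$ vertices.

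First I would recall the classical fact that the stick number of every non-trivial knot is at least $6$, the minimum value $6$ being realized by the trefoil; equivalently, any closed polygon built from at most $5$ straight segments is unknotted. I would then embed the tournament $T$ \emph{linearly}: place its $8$ vertices in general position in $\mathbb{R}^3$ and realize every edge as a straight segment. In this embedding each cycle of $T$ of length $k$ is a closed $k$-gon, so every cycle of length at most $5$ is automatically unknotted. The decisive step is a counting constraint: a non-split $2$-component link consists of two vertex-disjoint cycles $L_1, L_2$, each of length at least $3$. Since $T$ has only $8$ vertices and $L_1, L_2$ are disjoint, $|L_1| + |L_2| \leq 8$, which forces both $|L_1| \leq 5$ and $|L_2| \leq 5$. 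Hence, in the linear embedding, both components are unknotted, and no $2$-component link has a knotted component.

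The point I expect to be the genuine obstacle is not the counting but the topological subtlety lurking behind it. One cannot simply assert that short cycles are unknotted: in a poorly chosen (yet still tame) embedding even a triangle can be knotted, by dividing a trefoil into three arcs. The resolution, and essentially the only idea in the proof, is that we are free to \emph{select} the embedding rather than argue over all of them; the straight-line embedding is precisely the choice that forces every cycle of length at most $5$ to be unknotted via the stick-number bound. With that choice in hand, the argument is orientation-free and applies to every $8$-vertex tournament, yielding the stated lower bound of $9$.
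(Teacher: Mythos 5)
Your proof is correct, but it takes a genuinely different route from the paper's. The paper argues by reduction: if every embedding of a tournament contained a consistently oriented non-split link with a knotted component, that tournament would in particular be intrinsically knotted as a digraph, and this is ruled out by the paper's earlier result that no tournament on 8 vertices is intrinsically knotted as a digraph --- a result that itself required a delicate case analysis placing the tournament into the Abrams--Mellor--Trott embedding of $K_8$. Your argument is instead self-contained and orientation-free: take a straight-line embedding with the $8$ vertices in general position; any non-split $2$-component link uses two vertex-disjoint cycles with $|L_1|+|L_2|\leq 8$, so each component is a polygon with at most $5$ sticks, hence unknotted by the stick-number bound. Each approach buys something. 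Yours proves a strictly stronger statement --- no graph on $8$ vertices, directed or undirected, is intrinsically $2$-linked with $1$-knotted component --- and it is elementary modulo the classical fact that every nontrivial knot has stick number at least $6$. The paper's reduction costs essentially nothing given the machinery it has already built, and it makes explicit the logical relationship between the two directed-graph properties (2-linked with a knotted component implies intrinsically knotted), which is what lets the paper quote its knotting lower bound directly. One limitation of your method worth noting: it is rigid at $8$ vertices, since with $9$ vertices a linear embedding can contain a disjoint $3$-cycle and $6$-cycle, and a hexagon can realize a trefoil; so unlike the reduction, your technique cannot be pushed further even if stronger knotting lower bounds become available.
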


\begin{proof}
If every embedding of a tournament contains a consistently oriented non-split link where at least one
component of the link is a non-trivial knot, then that tournament is intrinsically knotted as a digraph.

By Proposition \ref{unknotted_prop}, no tournament on 8 vertices is intrinsically knotted as a digraph.
\end{proof}

\begin{lemma}
There exists a tournament $T'$ on 14 vertices that is intrinsically knotted as a digraph, and further the
pair of adjacent edges $y_3 \alpha$, $\alpha y'_3$ are contained in a consistently oriented non-trivial knot
in every embedding of $T'$. \label{link_knot_t_prime}
\end{lemma}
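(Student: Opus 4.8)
The plan is to modify the $12$-vertex intrinsically knotted tournament of Proposition \ref{knotted_prop} by splitting its distinguished vertex $y_3$. Recall that in that construction $y_3$ simultaneously serves as the sink of the triangle $T_3$ (found inside the $K_6$ on $\{x_1,x_2,x_3,y_1,y_2,y_3\}$) and as the source of the triangle $T_4$ (the preferred vertex of the $K_{3,3,1}$ on $\{y_3,a_i,b_j\}$). Its incoming edges $x_i\to y_3$, $y_1\to y_3$, $y_2\to y_3$ realize the first role, while its outgoing edges $y_3\to a_j$, $y_3\to b_i$ realize the second. I would keep the incoming edges on a vertex still called $y_3$, move the outgoing edges onto a new vertex $y'_3$, and join the two by the directed path $y_3\to\alpha\to y'_3$ through a new vertex $\alpha$. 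All remaining edges (those between $y_3$ and $\{a_j,b_i\}$, between $y'_3$ and $\{x_i,y_1,y_2\}$, the edges at $\alpha$ other than the two path edges, and the edge between $y_3$ and $y'_3$) are oriented arbitrarily. Since we have replaced one vertex by three, this gives a tournament $T'$ on $14$ vertices.

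I would then rerun the argument of Proposition \ref{knotted_prop} essentially verbatim, with $y_3$ playing its $K_6$ role and $y'_3$ playing its $K_{3,3,1}$ role. Because the $K_6$ on $\{x_1,x_2,x_3,y_1,y_2,y_3\}$ and the $K_{3,3,1}$ on $\{y'_3,a_i,b_j\}$ are intact with exactly the required edge orientations, every embedding still yields triangles $T_1,T_2,T_3,T_4$ with the needed odd linking numbers, with $y_3$ the sink of $T_3$ and $y'_3$ the source of $T_4$. The only change to the $\overline{D_4}$ skeleton is that the former identification ``sink of $T_3$ $=$ source of $T_4$'' is now the path $y_3\to\alpha\to y'_3$. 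After deleting every edge outside the $T_i$ and the chosen connecting edges, $\alpha$ has in-degree and out-degree $1$, so it is a source in the skeleton minus $y_3\alpha$; consistent edge contraction of $y_3\alpha$ and then of the resulting edge into $y'_3$ collapses the path to a single vertex and recovers exactly the $\overline{D_4}$ of Corollary 3.1 of \cite{flemfoisy}. Hence $T'$ is intrinsically knotted as a digraph.

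For the ``further'' claim I would observe that the guaranteed knot of Corollary 3.1 of \cite{flemfoisy} is the consistently oriented cycle that traverses an arc of each $T_i$ together with all four connecting links, and therefore passes through the $T_3$--$T_4$ junction. Lifting this cycle back through the consistent edge contraction re-expands the collapsed junction into the path $y_3\to\alpha\to y'_3$, so the knot contains the adjacent edges $y_3\alpha$ and $\alpha y'_3$, and, being obtained by un-contracting consistently oriented edges, remains consistently oriented. I expect the main obstacle to be confirming this last point rigorously: one must check that the knotted cycle supplied by the $\overline{D_4}$ lemma genuinely uses the $T_3$--$T_4$ connector rather than bypassing it, and that un-contracting the inserted degree-two vertex $\alpha$ keeps the cycle a single consistently oriented non-trivial knot. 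Verifying that the arbitrary orientations on the deleted edges cannot interfere, and that the split does not disturb the linking computations inside $K_6$ and $K_{3,3,1}$, is routine by the remarks above.
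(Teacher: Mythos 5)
Your proposal is correct and takes essentially the same approach as the paper: the paper likewise splits $y_3$ into the directed path $y_3 \to \alpha \to y'_3$, keeps the $K_6$ role on $y_3$ and moves the $K_{3,3,1}$ role to $y'_3$, and reruns Proposition \ref{knotted_prop}, the only cosmetic difference being that the paper leaves the edges $y_3\alpha$ and $\alpha y'_3$ uncontracted (obtaining a $\overline{D_4}$ with one vertex expanded into a consistently oriented 2-path) rather than contracting the path and lifting the knot back as you do. The point you flag as the main obstacle is fine: the knotted cycle guaranteed by the $\overline{D_4}$ lemma passes through all four shared (sink-source) vertices, hence through the $T_3$--$T_4$ junction, which is exactly what the paper's formulation encodes.
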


\begin{proof}

We construct $T'$ based on the knotted tournament from Proposition \ref{knotted_prop}.

The tournament $T'$ contains 14 vertices.  Label them $x_1, x_2, x_3, y_1, y_2, y_3, \alpha, y'_3, a_1,$ $
a_2, a_3, b_1, b_2, b_3$. Orient the edge from $y_3$ to $\alpha$ and the edge from $\alpha$ to $y'_3$.
Orient the edges from $y'_3$ to $a_j$, and from $y'_3$ to $b_i$.

Continue to orient the edges as in Proposition \ref{knotted_prop}, that is from $x_i$ to $y_j$, from $x_i$ to
$x_j$ for $j>i$, and from $y_i$ to $y_j$ for $j>i$. Orient the edges from $y_i$ to $a_j$, from $x_3$ to
$a_i$, from $a_i$ to $b_j$, from $b_i$ to $x_j$, and $b_i$ to $y_1$. Give the remaining edges an arbitrary orientation.

Note that $x_1, x_2, x_3, y_1, y_2, y_3$ form a $K_6$ subgraph, and hence must contain a pair of 3-cycles
with non-zero linking number in every embedding.   Label the $3$-cycle containing $y_3$ as $T_3$.  The vertices
$y'_3, a_1, a_2,a_3,b_1,b_2,b_3$ form a subgraph isomorphic to $K_{3,3,1}$, and hence contain a 3-cycle and
4-cycle with non-zero linking number in every embedding, with $y'_3$ in the 3-cycle.   Label this 3-cycle
$T_4$. Note that $y_3$ is a sink in $T_3$ and $y'_3$ is a source in $T_4$.

We may now continue as in Proposition \ref{knotted_prop}, using consistent edge contraction to construct a
$\overline{D_4}$ graph.  If we choose to contract all edges except $y_3 \alpha$ and $\alpha y'_3$, we obtain
a $\overline{D_4}$ graph, with one vertex expanded into a consistently oriented 2-path.   Thus, the edges
$y_3 \alpha$ and $\alpha y'_3$ are contained in a consistently oriented cycle that is a non-trivial knot in
every embedding of $T'$.

\end{proof}

\begin{prop}
There exists a tournament $T$ on 107 vertices that is intrinsically 2-linked with 1-knotted component as a
directed graph.
\end{prop}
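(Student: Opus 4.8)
The plan is to build $T$ out of copies of the knot-forcing gadget $\hat{T'}$ of Lemma \ref{link_knot_t_prime}, chained together in the style of the $n$-linking constructions (Propositions \ref{4-linked_prop}--\ref{nlinking_prop}), so that the forced knot of one copy is also driven into a non-split $2$-link. The organizing observation is that in every embedding of $\hat{T'}$ the consistently oriented knot $K$ runs through the distinguished $2$-path $y_3 \to \alpha \to y'_3$, and hence decomposes into that $2$-path together with a complementary consistently oriented path from $y'_3$ back to $y_3$. This is exactly the ``two consistent paths between two vertices'' decomposition enjoyed by the cycle $L_1$ in the linking building block of Lemma \ref{t_prime_lemma}. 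So $K$ is eligible to play the role of a chained cycle $L_{i1}$ in a linking necklace while simultaneously remaining a non-trivial knot. Note that a single copy of $\hat{T'}$ cannot suffice (otherwise $14$ vertices would already do): although $\hat{T'}$ forces its knot, nothing internal forces that knot to be linked with a second cycle, which is why the multi-copy construction is needed.

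Concretely, I would take an appropriate number $k$ of copies $\hat{T'}_1,\dots,\hat{T'}_k$, add directed edges cyclically joining the endpoint $y'_{3,i}$ of one copy to the endpoint $y_{3,i+1}$ of the next (as the $e_i$ edges of Proposition \ref{nlinking_prop}), and complete the graph to a tournament by orienting all remaining edges arbitrarily. Each copy contributes its knotted cycle $K_i$ (in the role of $L_{i1}$) together with an internal cycle available to serve as a linking partner $L_{i2}$, these partners arising from the linked pairs $(T_1,T_3)$ and $(T_4,S)$ already present in the $K_6$ and $K_{3,3,1}$ subgraphs used in Proposition \ref{knotted_prop} to assemble $K_i$. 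The vertex accounting --- fourteen vertices per knot gadget, together with the auxiliary vertices and the identifications forced by the cyclic chaining --- is what produces the stated total of $107$ vertices; this bookkeeping is routine once the construction is fixed.

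The analytic core is then the mod-$2$ homological pigeonhole of Proposition \ref{nlinking_prop}. Writing $M$ for the matrix with entries $m_{ij}=lk(K_i,L_{j2})\bmod 2$, the self-linking of each bead would give $m_{ii}=1$, and the rank-plus-row-sum argument produces a consistently oriented cycle having odd linking number with some partner $L_{j2}$, yielding a non-split $2$-link. Because each $K_i$ is a consistently oriented non-trivial knot, if this linked cycle can be taken to be one of the $K_i$ then we are done: the $2$-link has a knotted component, all cycles are consistently oriented, and adding arbitrary edges to form the full tournament preserves the property since $T$ contains the construction as a subgraph.

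The hard part will be precisely the last clause. In the necklace argument of Proposition \ref{nlinking_prop} the cycle that homology guarantees to be linked is the composite $Z=\bigcup P_{i\epsilon}\cup e_i$ assembled from path-segments of several different copies, and such a spliced composite need not be knotted even though every individual $K_i$ is. The substantive work is therefore to arrange the chaining and the index set $I$ so that the surviving linked component is a single gadget's knot $K_i$ rather than a composite --- equivalently, to establish the per-bead statement that the \emph{whole} knotted cycle $K_i$ (not merely its sub-triangle $T_3$) links a partner with odd linking number, and that this self-linking persists through the homological reduction. Verifying this self-linking-with-a-knot property for $\hat{T'}$, together with the consistency-of-orientation checks along the chain, is where the genuine difficulty lies; the vertex count and the tournament completion are then immediate.
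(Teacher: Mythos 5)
Your proposal contains a genuine gap, and it is exactly the one you flag at the end but do not resolve. The necklace-plus-mod-2-pigeonhole machinery of Proposition \ref{nlinking_prop} is the wrong engine here, for two reasons. First, its very first requirement already fails: the matrix argument needs $m_{ii}=lk(K_i,L_{i2})\equiv 1$, i.e.\ each gadget must internally force a link \emph{having the knotted cycle as a component}. But Lemma \ref{link_knot_t_prime} forces only a knot through the $2$-path $y_3\alpha,\ \alpha y'_3$; the linked pairs inside the $K_6$ and $K_{3,3,1}$ subgraphs are links between triangles and $4$-cycles whose vertices are consumed in building the knot, and nothing forces any cycle disjoint from $K_i$ to have odd linking number with $K_i$. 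Second, even if that were repaired, the cycle the homological correction produces is the splice $Z=\bigcup P_{i\epsilon}\cup e_i$, and a splice of arcs from several copies need not be knotted; since it is precisely the linked component that must be knotted, the argument collapses at its conclusion. Declaring this ``the substantive work'' leaves the actual theorem unproved.

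The paper avoids the necklace entirely and instead uses the gluing pattern of \cite{flem}. Take $9$ copies of the $14$-vertex gadget; identify all nine $\alpha_i$ to a single vertex $\alpha$; identify the nine $y_{3i}$ in threes to get $v_1,v_2,v_3$ and the nine $y'_{3i}$ in threes (transversally, so that each pair $(v_j,w_i)$ corresponds to exactly one copy) to get $w_1,w_2,w_3$; add one new vertex $\beta$ with edges $\beta w_i$ and $v_i\beta$. Each copy $m$ still forces a consistently oriented knotted cycle $c_m$ consisting of $v_j\alpha$, $\alpha w_i$, and a path $P_m$ from $w_i$ to $v_j$. Now $\{\alpha,\beta\}$, $\{v_1,v_2,v_3\}$, $\{w_1,w_2,w_3\}$ together with the edges at $\alpha,\beta$ and the nine paths $P_m$ form a graph homeomorphic to $K_{3,3,2}$, so every embedding contains a pair of linked, consistently oriented ``triangles'' $\alpha w_iv_j$ and $\beta w_kv_l$; and the triangle through $\alpha$ \emph{is literally} the knotted cycle $c_m$ of the corresponding copy. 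So the knot is the linked component by construction, with no homological correction and no spliced cycles. The count is $9\cdot 14 - 8 - 6 - 6 + 1 = 107$, which your cyclic chain cannot reproduce. The moral difference: your approach tries to make a forced knot also be linked after the fact; the paper's approach wires the forced knots in as the edges of an intrinsically linked graph, so linking and knotting are achieved by the same cycle automatically.
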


\begin{proof}
We will construct $T$ from 9 copies of the tournament $T'$ from Lemma \ref{link_knot_t_prime} and an
additional vertex $\beta$, in a manner similar to the construction of Proposition 2.2 of \cite{flem}.

Label the 9 copies of $T'$ as $T'_i$.  Identify all $\alpha_i$ to form a single vertex $\alpha$.  We will
identify edges $y_{3i} \alpha$ and $\alpha y'_{3i}$ as follows.  Identify $y_{31} \alpha, y_{32} \alpha,
y_{33} \alpha$ and label the new vertex $v_1$. Identify $y_{34} \alpha, y_{35} \alpha, y_{36} \alpha$ and
label the new vertex $v_2$. Identify $y_{37} \alpha, y_{38} \alpha, y_{39} \alpha$ and label the new vertex
$v_3$.

Identify $\alpha y'_{31}, \alpha y'_{34}, \alpha y'_{37}$ and label the new vertex $w_1$. Identify $\alpha
y'_{32}, \alpha y'_{35}, \alpha y'_{38}$ and label the new vertex $w_2$. Identify $\alpha y'_{33}, \alpha
y'_{36}, \alpha y'_{39}$ and label the new vertex $w_3$.

Orient the edges from $\beta$ to the $w_i$ and from $v_i$ to $\beta$.  Add edges of arbitrary orientation to complete the construction of $T$.

Embed $T$.  In each copy of $T'_i$ there exists a consistently oriented knotted cycle $c_i$  that contains
edges $y_{3i} \alpha$ and $\alpha y'_{3i}$.  The cycle $c_i$ is composed of $y_{3i} \alpha, \alpha y'_{3i}$
and a path $P_i$ from $y'_{3i} $ to $y_{3i}$.

Consider the vertices $\alpha, \beta, v_i, w_i$ together with the edges $\alpha w_i$, $v_i \alpha$, $\beta
w_i$, $v_i \beta$ and the paths $P_i$.  This graph is isomorphic to $K_{3,3,2}$, and thus must contain a pair
3-cycles with non-zero linking number \cite{REU}.  Further, the 3-cycles must be of the form $\alpha w_i v_j$
and $\beta w_k v_l$.  These cycles are consistently oriented, and by construction $\alpha w_i v_j$ is a
non-trivial knot.

\end{proof}

\section{The Disjoint Linking Property in Tournaments}

Chan et al (\cite{REU}) demonstrated graphs that have a disjoint pair of links in every spatial embedding
(have the disjoint linking property) but do not contain disjoint copies of intrinsically linked graphs. We
modify their construction here for tournaments.

\begin{prop}
No tournament on 11 vertices has the disjoint linking property.
\end{prop}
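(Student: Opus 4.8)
The plan is to prove this by a vertex-counting argument, exactly parallel to the lower bounds already established for intrinsic $4$-linking and $5$-linking in Sections 5 and 6. First I would unpack what the disjoint linking property demands of a tournament on $11$ vertices: every spatial embedding would have to contain two \emph{vertex-disjoint} non-split links, each made up of consistently oriented cycles. Since a non-split link has at least two components, and the two links are required to be disjoint from one another, any embedding realizing this property would have to contain at least four pairwise vertex-disjoint consistently oriented cycles.

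Next I would invoke the fact, used repeatedly in the preceding lower-bound propositions, that in a tournament (indeed in any simple directed graph) every cycle uses at least three vertices, as there are no loops or multiple edges. Four pairwise vertex-disjoint cycles therefore require at least $4 \times 3 = 12$ distinct vertices.

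Finally, since a tournament on $11$ vertices cannot contain four pairwise vertex-disjoint cycles at all, no embedding of it can contain a disjoint pair of links. The conclusion is independent of the chosen embedding and of the edge orientations, so no $11$-vertex tournament has the disjoint linking property.

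The only point requiring care is the claim that a disjoint pair of links genuinely forces four disjoint cycles rather than fewer; this rests on the definition that each link is non-split (hence has at least two components) together with the requirement that the two links share no vertices. Beyond that there is no real obstacle: the argument is purely combinatorial and, unlike the upper-bound constructions, does not require exhibiting any particular embedding, which mirrors the structure of the $n$-linking lower bounds earlier in the paper.
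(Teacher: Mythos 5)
Your proposal is correct and is essentially identical to the paper's proof: the paper likewise observes that a disjoint pair of links requires at least four disjoint cycles, that every cycle in a tournament uses at least three vertices, and hence that at least $12$ vertices are needed. Your write-up simply spells out the intermediate steps (non-split links have at least two components each, and the count is independent of the embedding and orientations) in more detail than the paper does.
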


\begin{proof}
A disjoint pair of links must have at least 4 disjoint cycles. Any cycle in a tournament must contain at
least 3 vertices. Thus any tournament with the disjoint linking property must have 12 or more vertices.
\end{proof}

\begin{prop}
There exists a tournament on 14 vertices that has the disjoint linking property, but does not contain two
disjoint tournaments that are intrinsically linked.
\end{prop}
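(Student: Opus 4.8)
The plan is to treat the two assertions separately, since in the tournament setting they have very different difficulty. The claim that $T$ contains no two vertex-disjoint intrinsically linked sub-tournaments is, in contrast to the undirected situation of \cite{REU}, essentially free: by Proposition \ref{K_6_nIL_prop}, Theorem \ref{K7_NIL_thm}, and Theorem \ref{linked_tournament_thm}, the smallest tournament that is intrinsically linked as a digraph has $8$ vertices, so two vertex-disjoint such sub-tournaments would occupy at least $16$ vertices. As $14 < 16$, no $14$-vertex tournament can contain them. This is exactly why the target order is $14$: it must lie strictly below $2\cdot 8 = 16$, so that the disjoint linking property is forced on a vertex set too small to house two disjoint intrinsically linked tournaments.

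All of the real work therefore goes into the disjoint linking property, and here I would orient the undirected $14$-vertex construction of Chan et al \cite{REU}. That construction is assembled from complete multipartite pieces (copies of $K_{3,3,2}$), each of which forces a non-split link in every embedding; I would orient each such piece using the scheme of Theorem \ref{linked_tournament_thm} --- directing edges from the $2$-vertex part into one tripartite class, out of the other, and across from the first tripartite class to the second --- and give every remaining edge of $K_{14}$ an arbitrary orientation to complete $T$. As in Theorem \ref{linked_tournament_thm}, each oriented $K_{3,3,2}$ piece then forces, in every embedding, a pair of disjoint $3$-cycles $xa_ib_j$, $ya_kb_l$ of odd linking number that are automatically consistently oriented, so each piece contributes one non-split consistently oriented link.

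The heart of the proof, and the step I expect to be the main obstacle, is promoting ``one forced link per piece'' to ``two vertex-disjoint consistently oriented links in every embedding.'' Because the pieces must overlap to hold the count at $14$, a careless choice of the two links can share vertices, and disjointness has to be guaranteed for all embeddings at once. I would reproduce the homological bookkeeping of \cite{REU}: assemble from each piece a short list of candidate consistently oriented cycles, note that a suitable $\mathbb{Z}$-linear combination of their classes vanishes in $H_1(\mathbb{R}^3 - L, \mathbb{Z})$ for each potential partner $L$, and invoke the pigeonhole principle to extract two cycles drawn from vertex-disjoint pieces, each having nonzero linking number with a partner. Two points must be checked carefully. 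First, the chosen orientation must render consistently oriented \emph{every} cycle that appears in this counting argument, not merely the particular $3$-cycles of Theorem \ref{linked_tournament_thm}; this is where the directed problem is genuinely harder than the undirected one. Second, the arbitrary completion of $T$ to a tournament must be unable to subvert the conclusion, which holds because the argument only invokes edges already oriented inside the $K_{3,3,2}$ pieces. Pinning down the precise overlap pattern that simultaneously realizes $14$ vertices and preserves disjointness is the final fiddly step.
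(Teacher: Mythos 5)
Your second half is exactly the paper's: an intrinsically linked tournament needs at least $8$ vertices (Theorem \ref{linked_tournament_thm} together with Proposition \ref{K_6_nIL_prop} and Theorem \ref{K7_NIL_thm}), so two disjoint ones need $16 > 14$ vertices. That part is fine.

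The first half, however, has a genuine gap, and the gap is precisely the construction. You never specify the graph: you defer to ``the undirected $14$-vertex construction of Chan et al,'' describe it as overlapping copies of $K_{3,3,2}$ glued together, and admit that ``pinning down the precise overlap pattern'' and verifying the homological bookkeeping are left open. The paper needs no such machinery, because the right construction makes disjointness trivial rather than hard. Take $K_{5,5,4}$ with parts $A$, $B$, $C$ and orient cyclically: $C \to A \to B \to C$. Since the parts are independent sets, every $3$-cycle has one vertex in each part, and the cyclic orientation makes \emph{every} $3$-cycle consistently oriented --- so the consistency worry you flag simply never arises. Any embedding restricted to $\{a_1,a_2,a_3,b_1,b_2,b_3,c_1,c_2\}$ is an embedded $K_{3,3,2}$ and so contains a pair of linked (hence consistently oriented) $3$-cycles by \cite{REU}; these use at most two vertices from each part, and deleting them leaves at least $3$, $3$, and $2$ vertices in $A$, $B$, $C$ respectively --- another $K_{3,3,2}$, which forces a second linked pair disjoint from the first. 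This removal argument is what the part sizes $5,5,4$ are for. By contrast, your proposed route (homological relations plus pigeonhole, as in Proposition \ref{23_3-linked_prop}) is designed to produce \emph{more components linked to each other}, not two links on \emph{disjoint vertex sets}; it is not clear it could ever deliver vertex-disjointness, and in any case you have not carried it out. So the proposal, as written, does not prove the disjoint linking property.
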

\begin{proof}

Consider $K_{5,5,4}$ with vertex set $A=\{a_1,..., a_5\}$, $B=\{b_1,...,b_5\}$ and $C=\{c_1,c_2, c_3, c_4\}$.
Form a directed graph by orienting edges from $C$ to $A$, from $A$ to $B$, and from $B$ to $C$. Call the
resulting graph $G$. Embed $G$. Let $D$ be the subgraph of $G$ formed by $\{a_1, a_2, a_3, b_1, b_2, b_3,
c_1, c_2\}$.  Note that $D$ is an orientation of $K_{3,3,2}$, and hence contains a pair of 3-cycles with
non-zero linking number. Further, these 3-cycles are consistently oriented. Since the graph $D$ is a subgraph
of $G$, $G$ has a pair of consistently-oriented linked 3-cycles. Removing the linked 3-cycles and all edges
incident to their vertices results in a embedded copy of $D$, which will again have a disjoint pair of linked
3-cycles. Thus $G$ has the disjoint linking property.

As $G$ has at most one edge between any pair of vertices, we may add edges of arbitrary orientation to $G$ to
from a tournament $T$ on 14 vertices.   As $G$ is a subgraph of $T$, $T$ has the disjoint linking property.
Since no tournament on 7 (or fewer) vertices is intrinsically linked, $T$ cannot contain two disjoint copies
of intrinsically linked tournaments.

\end{proof}

\section{The Consistency Gap}

Given a tournament, we may ignore the edge orientations and consider it as an undirected complete graph.  The
complete graph $K_6$ is intrinsically linked \cite{sachs} \cite{cg}, but by Theorem
\ref{linked_tournament_thm}, the smallest tournament that is intrinsically linked as a digraph has 8
vertices.  This implies that every embedding of a tournament on 6 or 7 vertices contains one or more
non-split links, but in some embeddings one or more the components of each non-split link has inconsistent
edge orientations.

Thus the requirement that the components of the non-split link have a consistent orientation for an
intrinsically linked digraph is restrictive and appears to require a larger and more complex graph to
satisfy.  We propose to measure how restrictive this condition is using the \emph{consistency gap}.

We define the \emph{consistency gap}, denoted $cg(n)$, as $m' - m$ where $K_m$ is the smallest intrinsically
$n$-linked complete graph, and $m'$ is the number of vertices in the smallest tournament that is
intrinsically $n$-linked as a directed graph.   Note that $cg(n) \geq 0$ for all $n$.  By the preceding
discussion, we have the following corollary.

\begin{cor}
The consistency gap at 2, $cg(2) = 2$.
\end{cor}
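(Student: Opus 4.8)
The plan is to assemble the two quantities appearing in the definition $cg(2) = m' - m$ from results already in hand. First I would fix the value of $m$, the order of the smallest intrinsically $2$-linked (i.e.\ intrinsically linked) complete graph. By the classical results of Sachs \cite{sachs} and Conway--Gordon \cite{cg}, $K_6$ is intrinsically linked, while $K_5$ and every smaller complete graph admits a linkless embedding. Hence the smallest intrinsically linked complete graph is $K_6$, giving $m = 6$.

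Next I would determine $m'$, the order of the smallest tournament that is intrinsically linked as a digraph. The lower bound is supplied by Proposition \ref{K_6_nIL_prop} together with Theorem \ref{K7_NIL_thm}, which show that no tournament on $6$ or $7$ vertices is intrinsically linked as a digraph; consequently $m' \geq 8$. The matching upper bound is Theorem \ref{linked_tournament_thm}, which exhibits an explicit tournament on $8$ vertices that is intrinsically linked as a digraph, so $m' \leq 8$. Combining the two inequalities yields $m' = 8$.

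Substituting into the definition then gives $cg(2) = m' - m = 8 - 6 = 2$, as claimed.

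Since every ingredient is already established, there is no genuine obstacle in the argument; the statement follows by bookkeeping. The only point that warrants care is the correct identification of the minimal intrinsically linked complete graph as $K_6$ rather than some $K_k$ with $k < 6$, which rests on the classical fact that $K_5$ possesses a linkless embedding. With that fact in place, the corollary is immediate.
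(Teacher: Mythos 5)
Your proof is correct and follows essentially the same route as the paper: the preceding discussion in the paper likewise obtains $m=6$ from Sachs and Conway--Gordon, obtains $m'=8$ from Proposition \ref{K_6_nIL_prop}, Theorem \ref{K7_NIL_thm}, and Theorem \ref{linked_tournament_thm}, and subtracts. Your version is slightly more explicit about the lower bound $m' \geq 8$ and about $K_5$ being linklessly embeddable, but the substance is identical.
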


By work of Flapan, Naimi, and Pommersheim, the smallest intrinsically 3-linked complete graph is $K_{10}$
\cite{fnp}.  Combining that with Proposition \ref{23_3-linked_prop}, we have the following.

\begin{cor}
The consistency gap at 3, $cg(3) \leq 13$.
\end{cor}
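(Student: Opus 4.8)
The plan is to combine the two results cited immediately before the corollary. The statement to prove is that $cg(3) \leq 13$, where $cg(3) = m' - m$ with $K_m$ the smallest intrinsically 3-linked complete graph and $m'$ the smallest intrinsically 3-linked tournament. By the definition of the consistency gap, establishing the bound requires only a lower bound on $m$ (the undirected side) and an upper bound on $m'$ (the tournament side), since subtracting a larger $m$ or a smaller $m'$ can only shrink the gap.

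First I would invoke the result of Flapan, Naimi, and Pommersheim \cite{fnp}, which identifies $K_{10}$ as the smallest intrinsically 3-linked complete graph. This gives $m = 10$ exactly, so in particular $m \geq 10$, which is the relevant inequality for bounding the difference from above. Next I would appeal to Proposition \ref{23_3-linked_prop}, which constructs a tournament on 23 vertices that is intrinsically 3-linked as a directed graph. This shows $m' \leq 23$, since $m'$ is by definition the minimum over all such tournaments. Combining these, $cg(3) = m' - m \leq 23 - 10 = 13$.

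The only subtlety worth stating explicitly is directional: to bound $m' - m$ from above we need $m' \leq 23$ together with $m \geq 10$, and both hold. Since Proposition \ref{23_3-linked_prop} is a pure existence statement (it does not claim 23 is optimal) and \cite{fnp} pins down $m$ precisely, the arithmetic is immediate and no case analysis or construction is required here. The main ``obstacle'' is purely bookkeeping: ensuring the inequality points the correct way given that $m'$ is an unknown minimum bounded above by the construction while $m$ is known exactly.

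In short, the proof is a one-line deduction: plugging $m = 10$ from \cite{fnp} and $m' \leq 23$ from Proposition \ref{23_3-linked_prop} into the defining formula $cg(3) = m' - m$ yields $cg(3) \leq 13$. I would not expect this step to present any difficulty beyond citing the two prior results correctly.
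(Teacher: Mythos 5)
Your proof is correct and is exactly the paper's argument: the paper likewise combines the result of Flapan, Naimi, and Pommersheim that $K_{10}$ is the smallest intrinsically 3-linked complete graph with Proposition \ref{23_3-linked_prop} to get $cg(3) \leq 23 - 10 = 13$. Your extra remark about the direction of the inequalities is sound and requires no further justification.
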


An $n$-linked complete graph must contain at least $3n$ vertices.   Combining this with Proposition
\ref{4-linked_prop} and Proposition \ref{5-linked_prop} we have the following.

\begin{cor}
The consistency gap at 4, $cg(4) \leq 54$, and the consistency gap at 5, $cg(5) \leq 139$.
\end{cor}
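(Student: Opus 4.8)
The plan is to bound $cg(n) = m' - m$ for $n = 4$ and $n = 5$ by pairing an upper bound on the tournament threshold $m'$ with a lower bound on the complete-graph threshold $m$; both ingredients are already established in the excerpt, so the argument is purely one of assembly. First, for the lower bound on $m$, I would invoke the observation recorded just before the corollary that an intrinsically $n$-linked complete graph must contain at least $3n$ vertices: a non-split $n$-component link consists of $n$ pairwise disjoint cycles, and since any cycle in $K_m$ uses at least three vertices, we must have $m \geq 3n$. Thus the smallest intrinsically $4$-linked complete graph satisfies $m \geq 12$, and the smallest intrinsically $5$-linked complete graph satisfies $m \geq 15$.

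Next, for the upper bound on $m'$, I would appeal directly to the explicit constructions already in hand. Proposition \ref{4-linked_prop} exhibits a tournament on $66$ vertices that is intrinsically $4$-linked as a digraph, so the smallest such tournament has $m' \leq 66$; likewise Proposition \ref{5-linked_prop} exhibits a tournament on $154$ vertices that is intrinsically $5$-linked as a digraph, so $m' \leq 154$.

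Combining the two estimates gives $cg(4) = m' - m \leq 66 - 12 = 54$ and $cg(5) = m' - m \leq 154 - 15 = 139$, as claimed. There is no genuine obstacle here: the only point requiring care is bookkeeping around the definition of $cg$, namely that it is the smallest tournament order minus the smallest complete-graph order, so that an upper bound on $cg(n)$ follows from an \emph{upper} bound on $m'$ together with a \emph{lower} bound on $m$. Since the explicit tournaments supply the former and the disjoint-cycle count supplies the latter, the corollary follows immediately.
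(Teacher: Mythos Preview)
Your proposal is correct and follows exactly the paper's approach: combine the trivial lower bound $m \geq 3n$ on the order of an intrinsically $n$-linked complete graph with the explicit upper bounds $m' \leq 66$ and $m' \leq 154$ from Propositions \ref{4-linked_prop} and \ref{5-linked_prop}, and subtract. Nothing further is needed.
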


Finally, the results of Section \ref{n_link_section} give a bound for the general case.

\begin{cor}
For $n > 5$, the consistency gap at $n$, $cg(n) \leq 8(2n-3)^2 - 3n$.
\end{cor}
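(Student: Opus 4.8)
The plan is to read the corollary off as the arithmetic combination of an upper bound on $m'$ and a lower bound on $m$, where $cg(n) = m' - m$ with $K_m$ the smallest intrinsically $n$-linked complete graph and $m'$ the order of the smallest tournament that is intrinsically $n$-linked as a directed graph. So I would bound $m'$ from above and $m$ from below, and subtract.

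For the upper bound on $m'$, Proposition \ref{nlinking_prop} exhibits a tournament on exactly $8(2n-3)^2$ vertices that is intrinsically $n$-linked as a directed graph. Since $m'$ is by definition the order of the \emph{smallest} such tournament, this single construction yields $m' \leq 8(2n-3)^2$ with no further work. For the lower bound on $m$, I would reuse the elementary counting argument that appears in the lower-bound propositions of the earlier sections: an $n$-component link consists of $n$ pairwise disjoint cycles, and every cycle in a complete graph spans at least three vertices, so any intrinsically $n$-linked complete graph must have at least $3n$ vertices. Hence $m \geq 3n$.

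Subtracting gives $cg(n) = m' - m \leq 8(2n-3)^2 - 3n$, as claimed. The hypothesis $n > 5$ plays no role in the inequality itself; it merely records that the smaller values $n \in \{2,3,4,5\}$ are already handled by the sharper, case-specific bounds of the preceding corollaries, where the explicit constructions of Propositions \ref{23_3-linked_prop}, \ref{4-linked_prop}, and \ref{5-linked_prop} beat the generic estimate $8(2n-3)^2$. There is essentially no obstacle to overcome: all of the substance is carried by Proposition \ref{nlinking_prop}, and the corollary is a one-line consequence once that construction is combined with the trivial $3n$ lower bound on $m$.
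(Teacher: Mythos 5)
Your proof is correct and follows the paper's own (implicit) argument exactly: the paper likewise combines the upper bound $m' \leq 8(2n-3)^2$ from Proposition \ref{nlinking_prop} with the trivial lower bound $m \geq 3n$ for an intrinsically $n$-linked complete graph, and subtracts. Your remark on the role of the hypothesis $n>5$ is also consistent with how the paper organizes the preceding corollaries.
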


We conjecture the consistency gap to be non-decreasing in $n$.  Does it increase without bound?

\bigskip

\textsc{666 5th Avenue, 8th Floor,  New York, NY 10103}

\medskip

\textsc{Department of Mathematics, SUNY Potsdam,  Potsdam, NY 13676}

\end{document}